\newcommand{\BlackBoxes}{\global\overfullrule5pt}
\newcommand{\R}{\mathbb{R}}
\newcommand{\N}{\mathbb{N}}
\newcommand{\Pop}{\mathbb{P}}
\newcommand{\Q}{\mathbb{Q}}
\newcommand{\A}{\mathcal{A}}
\newcommand{\B}{\mathcal{B}}
\newcommand{\BB}{\mathbb{B}}
\renewcommand{\b}{\mathbf{b}}
\newcommand{\E}{\mathbb{E}}
\renewcommand{\H}{\mathcal{H}}
\newcommand{\M}{\mathcal{M}}
\renewcommand{\P}{\mathbb{P}}
\newcommand{\Qc}{\mathcal{Q}}
\newcommand{\Qf}{\mathfrak{Q}}
\newcommand{\T}{\mathcal{T}}
\newcommand{\Z}{\mathcal{Z}}
\newcommand{\Zf}{\mathfrak{Z}}
\newcommand{\1}{\mathbbm{1}}
\newcommand{\q}{F^{-1}}
\DeclareMathOperator{\dif}{d}
\DeclareMathOperator{\ES}{ES}
\DeclareMathOperator{\VaR}{VaR}
\newcommand{\ubar}[1]{\underaccent{\bar}{#1}}
\newtheorem{theorem}{Theorem}
\newtheorem{corollary}[theorem]{Corollary}
\newtheorem{lemma}[theorem]{Lemma}
\newtheorem{proposition}[theorem]{Proposition}
\theoremstyle{definition}
\newtheorem{example}[theorem]{Example}
\newtheorem{remark}[theorem]{Remark}
\newtheorem{definition}[theorem]{Definition}
\newtheorem{assumption}[theorem]{Assumption}
\numberwithin{equation}{section} \numberwithin{theorem}{section}
\def\0{\kern0pt\-\nobreak\hskip0pt\relax}
\def\makeoverbar#1#2#3#4#5#6#7{ \setbox0=\hbox{$\m@th#2\mkern#5mu{{}#3{}}\mkern#6mu$} \setbox1=\null \dimen@=#4\fontdimen8#13 \dimen@=3.5\dimen@
\advance\dimen@ by \ht0 \dimen@=-#7\dimen@ \advance\dimen@ by \wd0
\ht1=\ht0 \dp1=\dp0 \wd1=\dimen@
\dimen@=\fontdimen8#13 \fontdimen8#13=#4\fontdimen8#13
\rlap{\hbox to \wd0{$\m@th\hss#2{\overline{\box1}}\mkern#5mu$}}
\fontdimen8#13=\dimen@}
\def\mylabel#1#2{{\def\@currentlabel{#2}\label{#1}}}
\begin{document}
\title[Markov Decision Processes with Recursive Risk Measures]{Markov Decision Processes with Recursive Risk Measures}
\author[N. \smash{B\"auerle}]{Nicole B\"auerle}
\address[N. B\"auerle]{Department of Mathematics,
Karlsruhe Institute of Technology (KIT), D-76128 Karlsruhe, Germany}

\email{\href{mailto:nicole.baeuerle@kit.edu}{nicole.baeuerle@kit.edu}}

\author[A. \smash{Glauner}]{Alexander Glauner}
\address[A. Glauner]{Department of Mathematics,
Karlsruhe Institute of Technology (KIT), D-76128 Karlsruhe, Germany}

\email{\href{mailto:alexander.glauner@kit.edu} {alexander.glauner@kit.edu}}


\begin{abstract}
In this paper, we consider risk-sensitive Markov Decision Processes (MDPs) with Borel state and action spaces and unbounded cost under both finite and infinite planning horizons. Our optimality criterion is based on the recursive application of static risk measures. This is motivated by recursive utilities in the economic literature, has been studied before for the entropic risk measure and is extended here to an axiomatic characterization of suitable risk measures. We derive a Bellman equation and prove the existence of Markovian optimal policies. For an infinite planning horizon, the model is shown to be contractive and the optimal policy to be stationary. Moreover, we establish a connection to distributionally robust MDPs, which provides a global interpretation of the recursively defined objective function. Monotone models are studied in particular. 
\end{abstract}
\maketitle


\makeatletter \providecommand\@dotsep{5} \makeatother

\vspace{0.5cm}
\begin{minipage}{14cm}
{\small
\begin{description}
\item[\rm \textsc{ Key words}]
{\small Risk-Sensitive Markov Decision Process; Risk Measure; Robustness}
\item[\rm \textsc{AMS subject classifications}] 
{\small 90C40, 91G70 }

\end{description}
}
\end{minipage}

\section{Introduction}

In this paper, we extend Markov Decision Processes (MDPs) to a recursive application of static risk measures. Our framework is such that it is applicable for a wide range of practical models. In particular we consider Borel state and action spaces, unbounded cost functions and rather general risk measures. 

In standard MDP theory we are concerned with minimizing the expected discounted cost of a controlled dynamic system over a finite or infinite time horizon.  The expectation has the nice property that it can be iterated which yields a recursive solution theory for these kind of problems, see e.g. the textbooks by \cite{puterman2014markov,hernandez2012discrete,BaeuerleRieder2011} for a mathematical treatment. However, there are applications where the simple expectation, which does not reflect the true risk of a decision, might not be the best choice to evaluate decisions. In particular when the management of cash flows is concerned, economists prefer to use dynamic utilities to compare their performance. An early axiomatic treatment of a dynamic utility which takes into account the revealed information is \cite{KrepsPorteus1978}. 
Later, the focus was more on an extension of static risk measures to dynamic risk measures. We mention here the following axiomatic approaches \cite{EpsteinSchneider2003,riedel2004dynamic,detlefsen2005conditional,weber2006distribution} just to name some of them. These approaches do not consider a control. For an overview up to 2011 see \cite{acciaio2011dynamic}. Later, besides the axiomatic characterization another important aspect has been time-consistency of the dynamic risk measures, see e.g. \cite{bion2009time,bielecki2018unified} for the situation without control and \cite{shapiro2012time,shapiro2012minimax} for the situation with control. In the latter reference it is shown that the only time-consistent risk measures are those which iterate  static ones. See also \cite{homem2016risk} for different ways to apply dynamic risk measures. 

Approaches to establish a theory for controlled dynamic risk measures have before been presented in \cite{Ruszcynski2010,shen2013risk,chu2014markov,AsienkiewiczJaskiewicz2017}.  \cite{Ruszcynski2010} is an axiomatic approach. The paper restricts to bounded random variables for the infinite time horizon and uses Markov risk measures to obtain time-consistency. However, some assumptions are indirect properties of the risk measures (see e.g. Theorem 2 in this paper). In \cite{shen2013risk} so-called risk maps are considered and weighted norm spaces are used to treat unbounded rewards. Concepts like sub- and uppermodules are needed to prove the main theorems. \cite{chu2014markov} also treats unbounded cost problems but restricts to coherent risk measures. Moreover, some assumptions on the existence of limits are made because the Fatou property of risk measures is not exploited there. The note \cite{AsienkiewiczJaskiewicz2017} restricts the discussion to entropic risk measures. 

There are also papers which apply recursive risk measures in specific problems. E.g.\ in \cite{jiang2016practicality} a convex combination of expectation and Expected Shortfall is used to tackle the problem of electric vehicle charging in a dynamic decision framework. The authors there compare true risk and expectation with the standard MDP problem. \cite{schur2019time} investigate dynamic pricing problems with dynamic Expected Shortfall and also compare their findings to the standard MDP.  \cite{BaeuerleJaskiewicz2017} consider optimal dividend payments under dynamic entropic risk measures and  \cite{BaeuerleJaskiewicz2018} optimal growth models under dynamic entropic risk measures. In \cite{tamar2016sequential} sampling-based algorithms for coherent risk measures are constructed.

In this paper now we restrict to a recursive application of static risk measures and use unbounded cost functions. The risk measures may be rather general and we state the needed properties for every result. In contrast to the earlier literature our assumptions are in most cases assumptions on the model data alone. We also treat the important case of monotone models where comonotonicity of the risk measures is crucial.  

In more detail the structure of our paper is as follows: In the next section, we summarize some important concepts of risk measures.  We consider in particular distortion risk measures. In Section \ref{sec:decision_model}, we introduce our Markov Decision model. The finite-horizon optimization problem is then considered in Section \ref{sec:finite}. The aim is to minimize recursive risk measures over a finite time horizon. We show here that for proper coherent risk measures with the Fatou property local bounding functions are sufficient for the well-posedness of the optimization problem. Otherwise global bounding function may be necessary. Under some continuity and compactness conditions on the MDP data we show that an optimal policy exists which is Markovian and the value of the problem can be computed recursively. In Section \ref{sec:infinite}, we consider the problem with an infinite time horizon. Here the first result, which states a fixed point property of the value function and the existence of an optimal stationary policy, is under the condition of coherence of the risk measure. In Section \ref{sec:robust}, we briefly discuss the relation to distributionally robust MDP. In Section \ref{sec:monotone}, we consider MDP with monotonicity properties. Here we can work with semicontinuous model data. Another special case arises when the cost function is bounded from below. Then, under the monotonicity assumptions the monetary risk measure does not have to be coherent but  comonotonic additive to obtain the same results. In the last section, we illustrate our results with some examples: We show that in a monotone recursive Value-at-Risk model, the optimal policy is myopic. Moreover, we consider stopping problems, casino games and a cash balance problem where structural properties of the standard MDP formulation still hold.

\section{Risk Measures}\label{sec:risk_measures}

Let a probability space $(\Omega, \A, \P)$ and a real number $p \in [1,\infty)$ be fixed. With $q \in (1,\infty]$ we denote the conjugate index satisfying $\frac1p + \frac1q=1$ under the convention $\frac1\infty=0$. Henceforth, $L^p=L^p(\Omega, \A, \P)$ denotes the vector space of real-valued random variables which have an integrable $p$-th moment. We follow the convention of the actuarial literature that positive realizations of random variables represent losses and negative ones gains. A \emph{risk measure} is a functional $\rho : L^p \to \bar{\R}$. The following properties will be important.

\begin{definition}\label{def:rm_properties}
	A risk measure $\rho : L^p \to \bar{\R}$ is
	\begin{enumerate}
		\item \emph{law-invariant} if $\rho(X)=\rho(Y)$ for $X,Y$ with the same distribution.
		\item \emph{monotone} if $X\leq Y$ implies $\rho(X) \leq \rho(Y)$.
		\item \emph{translation invariant} if $\rho(X+m)=\rho(X)+m$ for all $m \in \R$.
		\item \emph{normalized} if $\rho(0)=0$.
		\item \emph{finite} if $\rho(L^p) \subseteq \R$.
		\item \emph{comonotonic additive} if $\rho(X+Y) = \rho(X)+\rho(Y)$ for all comonotonic $X,Y$.
		\item \emph{positive homogeneous} if $\rho(\lambda X)=\lambda\rho(X)$ for all $\lambda \in \R_+$.
		\item \emph{convex} if $\rho(\lambda X+(1-\lambda)Y)\leq \lambda\rho(X)+(1-\lambda)\rho(Y)$ for $\lambda \in [0,1]$.
		\item \emph{subadditive} if $\rho(X+Y)\leq \rho(X)+\rho(Y)$ for all $X,Y$.
		\item said to have the \emph{Fatou property}, if for every sequence $\{X_n\}_{n \in \N} \subseteq L^p$ with $|X_n| \leq Y$ $\P$-a.s.\ for some $Y \in L^p$ and $X_n \to X$ $\P$-a.s.\ for some $X \in L^p$ it holds
		\[ \liminf_{n \to \infty} \rho(X_n) \geq \rho(X). \]
	\end{enumerate}
\end{definition}

A risk measure is called \emph{monetary} if it is monotone and translation invariant. It appears to be consensus in the literature that these two properties are a necessary minimal requirement for any risk measure. Monetary risk measures which are additionally positive homogeneous and subadditive are referred to as \emph{coherent}. Further, note that positive homogeneity implies normalization and makes convexity and subadditivity equivalent. The Fatou property means that the risk measure is lower semicontinuous w.r.t.\ dominated convergence. 

\begin{lemma}[Theorem 7.24 in \cite{Rueschendorf2013}]\label{thm:finite_convex_fatou}
	Finite and convex monetary risk measures have the Fatou property.
\end{lemma}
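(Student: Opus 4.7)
The plan is to deduce the Fatou property from two facts: the classical dominated convergence theorem in $L^p$, which turns the hypothesis (almost-sure convergence with an $L^p$-dominating envelope) into $L^p$-norm convergence, and an extended Namioka--Klee type statement that every finite convex monotone functional on $L^p$ ($1 \le p < \infty$) is continuous in the norm topology. Once both are in place, $X_n \to X$ in $L^p$-norm together with norm-continuity of $\rho$ gives $\rho(X_n) \to \rho(X)$, which is strictly stronger than $\liminf_n \rho(X_n) \ge \rho(X)$.

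The dominated convergence step is routine: $|X_n - X|^p \to 0$ $\P$-a.s.\ with envelope $(2Y)^p \in L^1$, so $\|X_n - X\|_p \to 0$.

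The substantive step is norm-continuity. By translation invariance it suffices to prove continuity at $0$, and by the standard convex-analysis fact that a convex functional locally bounded above at an interior point of its (effective) domain is continuous there, it suffices to show $\rho$ is bounded above on some norm-ball $B_\varepsilon(0)$. Assume, towards a contradiction, that it is not. Then one can extract a sequence $X_n \in L^p$ with $\|X_n\|_p \le 2^{-n}$ and $\rho(X_n) \ge n$. Monotonicity gives $\rho(|X_n|) \ge \rho(X_n) \ge n$, so after replacing $X_n$ by $|X_n|$ we may assume $X_n \ge 0$. Since $\sum_n \|X_n\|_p < \infty$, the series $\sum_n X_n$ converges in $L^p$ to some $Z \in L^p$ with $Z \ge X_n$ for every $n$. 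Monotonicity then yields $\rho(Z) \ge \rho(X_n) \ge n$ for all $n$, contradicting the finiteness $\rho(Z) \in \R$. Hence $\rho$ is bounded above on a neighbourhood of $0$, and convexity upgrades this to continuity at $0$, and then by translation invariance to continuity on all of $L^p$.

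The main obstacle is precisely this local boundedness step: in infinite-dimensional spaces finite convex functionals are not automatically continuous, and the argument uses in an essential way both monotonicity (to pass from $X_n$ to the dominating sum $Z$) and the Banach-lattice structure of $L^p$ for $p<\infty$. The same route is not available for $p=\infty$, where one only obtains continuity with respect to the Mackey topology and the Fatou property on $L^\infty$ requires a separate argument; that is why the lemma is restricted to $p\in[1,\infty)$ as fixed at the beginning of Section \ref{sec:risk_measures}.
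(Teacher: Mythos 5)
Your argument is correct in substance, but note that the paper does not prove this lemma at all: it is quoted as Theorem 7.24 in \cite{Rueschendorf2013}, so you have effectively reconstructed the proof that underlies the cited result. Your route is the standard one behind that theorem: an extended Namioka--Klee argument showing that a finite, convex, monotone functional on the Banach lattice $L^p$, $p\in[1,\infty)$, is norm-continuous (local boundedness above near $0$ via the series $Z=\sum_n X_n$ with $\|X_n\|_p\le 2^{-n}$, $X_n\ge 0$, $\rho(X_n)\ge n$, contradicting finiteness), combined with dominated convergence to turn the Fatou hypothesis into $L^p$-convergence; this actually yields full norm continuity, strictly more than the lower semicontinuity required by Definition \ref{def:rm_properties}. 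One small inaccuracy: translation invariance of a monetary risk measure only means $\rho(X+m)=\rho(X)+m$ for constants $m\in\R$, so it does not by itself reduce continuity at an arbitrary $X_0\in L^p$ to continuity at $0$, nor upgrade continuity at $0$ to continuity everywhere. This is harmless: either invoke the standard convex-analysis fact in its full form (a convex function finite on all of $L^p$ and bounded above on one ball is continuous, indeed locally Lipschitz, everywhere), or run your local-boundedness argument for the shifted functional $X\mapsto\rho(X+X_0)$, which is again finite, convex and monotone. With that repair the proof is complete, and it is a useful self-contained substitute for the external citation; your closing remark about why the argument is confined to $p<\infty$ is also accurate and consistent with the paper fixing $p\in[1,\infty)$ at the start of Section \ref{sec:risk_measures}.
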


Coherent risk measures satisfy a triangular inequality.

\begin{lemma}[Prop. 6 in \cite{Pichler2013}]\label{thm:coherent_triangular}
	For a coherent risk measure $\rho$ and $X,Y \in L^p$ it holds
	\[ \left\lvert \rho(X) - \rho(Y) \right\rvert \leq \rho(|X-Y|). \]
\end{lemma}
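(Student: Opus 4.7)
The plan is to derive both one-sided bounds $\rho(X)-\rho(Y)\leq \rho(|X-Y|)$ and $\rho(Y)-\rho(X)\leq \rho(|X-Y|)$ and then combine them. The only properties of $\rho$ that I expect to need are monotonicity and subadditivity; translation invariance and positive homogeneity will not be used explicitly, but they come for free since $\rho$ is coherent.

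First I would start from the pointwise inequality $X = Y + (X-Y) \leq Y + |X-Y|$, which holds $\P$-a.s. Applying monotonicity yields $\rho(X) \leq \rho\bigl(Y + |X-Y|\bigr)$, and then subadditivity gives
\[
\rho(X) \leq \rho(Y) + \rho\bigl(|X-Y|\bigr),
\]
that is, $\rho(X)-\rho(Y)\leq \rho(|X-Y|)$. Note here that $|X-Y|\in L^p$ since $X,Y\in L^p$, so all terms are well-defined in $\bar{\R}$.

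Next I would exchange the roles of $X$ and $Y$: from $Y \leq X + |X-Y|$ the same two-step argument (monotonicity followed by subadditivity) yields $\rho(Y)-\rho(X)\leq \rho(|X-Y|)$. Combining the two bounds gives the claim
\[
\bigl\lvert \rho(X) - \rho(Y) \bigr\rvert \leq \rho\bigl(|X-Y|\bigr).
\]

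There is no real obstacle here; the only thing to be slightly careful about is that subadditivity of coherent risk measures is stated unconditionally for all pairs in $L^p$, so no integrability side condition needs to be verified. The argument is essentially a ``reverse triangle inequality'' that works for any monotone, subadditive functional on $L^p$.
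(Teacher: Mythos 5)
Your argument is correct and is the standard reverse-triangle-inequality proof (monotonicity applied to $X\leq Y+|X-Y|$, then subadditivity, then symmetrize); the paper itself gives no proof but simply cites Pichler (2013), where essentially this same argument is used. The only caveat worth noting is that since $\rho$ is allowed to be $\bar{\R}$-valued, the rearrangement from $\rho(X)\leq\rho(Y)+\rho(|X-Y|)$ to $\rho(X)-\rho(Y)\leq\rho(|X-Y|)$ tacitly requires $\rho(Y)$ (and symmetrically $\rho(X)$) to be finite, which is the situation in which the lemma is applied in the paper.
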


We denote by $\M_1(\Omega,\A,\P)$ the set of probability measures on $(\Omega,\A)$ which are absolutely continuous with respect to $\P$ and define
\[ \M_1^q(\Omega,\A,\P) = \left\{ \Q \in\M_1(\Omega,\A,\P): \frac{\dif \Q}{\dif \P} \in L^{q}(\Omega,\A,\P) \right\}. \]
Recall that an extended real-valued convex functional is called \emph{proper} if it never attains $-\infty$ and is strictly smaller than $+\infty$ in at least one point. Coherent risk measures have the following dual or robust representation.

\begin{proposition}[Theorem 7.20 in \cite{Rueschendorf2013}]\label{thm:coherent_risk_measure_dual} 
	A functional $\rho: L^p \to \bar R$ is a proper coherent risk measure with the Fatou property if and only if there exists a subset $\Qc \subseteq \M_1^q(\Omega,\A,\P)$ such that
	\begin{align*}
	\rho(X)= \sup_{\Q \in \Qc} \E^\Q[X], \qquad X \in L^p.
	\end{align*}
	The supremum is attained since the subset $\Qc \subseteq \M_1^q(\Omega,\A,\P)$ can be chosen $\sigma(L^q,L^p)$-compact and the functional $\Q \mapsto \E^\Q[X]$ is $\sigma(L^q,L^p)$-continuous.
\end{proposition}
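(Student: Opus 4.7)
The plan is to establish the equivalence via convex duality on the dual pair $(L^p, L^q)$, with the ``if'' direction handled by direct verification and the ``only if'' direction by a Fenchel--Moreau argument.

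For the sufficiency direction, suppose $\rho(X) = \sup_{\Q \in \Qc} \E^\Q[X]$. Each $X \mapsto \E^\Q[X]$ is well-defined and finite on $L^p$ by H\"older's inequality (since $\dif\Q/\dif\P \in L^q$), and is linear, monotone, translation invariant, and positively homogeneous. Taking a supremum preserves these properties except linearity, which becomes subadditivity; hence $\rho$ is coherent, and it is proper as long as $\Qc \neq \emptyset$. The Fatou property then follows from the analogous property for each individual $\E^\Q$ (Fatou's lemma applied to the $\P$-dominated, hence $\Q$-dominated, sequence) and the general fact that $\liminf$ exchanges correctly with a supremum.

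For the necessity direction, I would first show that $\rho$ is $\sigma(L^p,L^q)$-lower semicontinuous. Because $\rho$ is convex, it suffices by Mazur's theorem to verify norm-lower semicontinuity, and this is where the Fatou property enters: an $L^p$-convergent sequence admits an a.s.-convergent subsequence dominated by an $L^p$ function (the standard $L^p$ argument), so the Fatou property gives $\liminf \rho(X_n) \geq \rho(X)$. Once LSC is in hand, Fenchel--Moreau on the dual pair $(L^p,L^q)$ yields $\rho = \rho^{**}$, with
\[ \rho^*(\xi) = \sup_{X \in L^p} \bigl( \E[\xi X] - \rho(X) \bigr), \qquad \xi \in L^q. \]
Positive homogeneity of $\rho$ forces $\rho^*$ to be the indicator of a convex cone, i.e.\ to take only the values $0$ and $+\infty$, so $\rho(X) = \sup_{\xi \in \Qc_0} \E[\xi X]$ for $\Qc_0 := \{\xi \in L^q : \rho^*(\xi) = 0\}$. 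Monotonicity of $\rho$ forces $\xi \geq 0$ a.s.\ on $\Qc_0$ (else pairing $\xi$ with $-\1_{\{\xi < 0\}} \in L^p$ pushes $\rho^*(\xi)$ to $+\infty$), and translation invariance $\rho(X+m) = \rho(X)+m$ forces $\E[\xi] = 1$. Hence every $\xi \in \Qc_0$ is a density $\dif\Q/\dif\P$ of some $\Q \in \M_1^q(\Omega,\A,\P)$, and setting $\Qc := \Qc_0$ delivers the representation.

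For the final compactness and attainment claim I would note that $\Qc_0$ is $\sigma(L^q,L^p)$-closed (as a sublevel set of the Fenchel conjugate $\rho^*$, which is automatically $\sigma(L^q,L^p)$-lower semicontinuous) and $L^q$-norm bounded (using coherence: positive homogeneity gives $\E[\xi X] \leq \rho(X)$ for every $X$, and a Banach--Steinhaus argument on the real-valued restriction of $\rho$ to $L^\infty$ bounds $\|\xi\|_q$ uniformly on $\Qc_0$). Banach--Alaoglu then provides $\sigma(L^q,L^p)$-compactness, and $\sigma(L^q,L^p)$-continuity of $\Q \mapsto \E^\Q[X]$ is immediate from the definition of that topology, so the supremum is attained. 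I expect the main technical obstacle to be the lower semicontinuity step: passing from the a.s.-dominated Fatou property to $\sigma(L^p,L^q)$-LSC is not automatic and relies on combining Mazur with a careful a.s.-subsequence extraction (Koml\'os-type argument for $p=1$, more elementary for $p>1$). Everything else is bookkeeping within the standard convex-duality framework.
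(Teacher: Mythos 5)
The paper offers no proof of this proposition at all: it is quoted verbatim with a citation to Theorem 7.20 in R\"uschendorf (2013), so there is nothing to compare against except the literature. Your Fenchel--Moreau route is exactly the standard argument behind that theorem, and the equivalence part of your proof is essentially correct: the sufficiency direction is routine verification (including the Fatou property via dominated convergence for each fixed $\Q$ and the inequality $\liminf_n\sup_\Q\geq\sup_\Q\liminf_n$), and for necessity the chain ``Fatou $\Rightarrow$ norm-lower semicontinuity (via an a.s.-convergent, $L^p$-dominated subsequence along a subsequence realizing the liminf) $\Rightarrow$ $\sigma(L^p,L^q)$-lower semicontinuity by convexity and Mazur $\Rightarrow$ $\rho=\rho^{**}$'' is sound, as is the identification of the dual variables as probability densities via positive homogeneity, monotonicity and translation invariance. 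Two cosmetic points: for a sublinear functional, $\rho^*$ is the indicator of a convex \emph{set} (the subdifferential at $0$), not in general of a cone; and no Koml\'os-type argument is needed for $p=1$, since the dominated a.s.-subsequence comes from norm convergence exactly as for $p>1$.

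The genuine gap is in the final compactness/attainment claim. Your Banach--Steinhaus step is applied on the wrong space: pointwise boundedness of $X\mapsto\E[\xi X]$ on $L^\infty$ (where $\rho$ is indeed finite, $|\rho(X)|\leq\|X\|_\infty$) only yields a uniform bound on $\|\xi\|_1$, which is trivially $1$ for probability densities, and says nothing about $\|\xi\|_q$. To get a uniform $L^q$-bound on $\{\xi\in L^q:\rho^*(\xi)=0\}$ by uniform boundedness you need $\sup_{\xi}|\E[\xi X]|\leq\max\{\rho(X),\rho(-X)\}<\infty$ for \emph{every} $X\in L^p$, i.e.\ finiteness of $\rho$ on all of $L^p$ --- and ``proper'' does not give this. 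Indeed, no argument can close this under the stated hypotheses: on $L^1$ take $\rho(X)=\sup_n\P(A_n)^{-1}\E[X\1_{A_n}]$ with $\P(A_n)\downarrow 0$; this is proper, coherent and has the Fatou property, yet it is not finite-valued, and since any $\sigma(L^q,L^p)$-compact representing set would force $\rho(X)=\max_{\Q}\E^\Q[X]<\infty$ for all $X$, no such compact choice of $\Qc$ exists. So the attainment sentence requires the additional hypothesis that $\rho$ is finite (real-valued) on $L^p$ --- which is what the cited theorem rests on, via the Kaina--R\"uschendorf result that finite convex risk measures on $L^p$ are norm-continuous, whence the full dual set is weak*-closed and norm-bounded and Banach--Alaoglu applies. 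Your write-up should either add that finiteness assumption explicitly at this step or flag that the last sentence of the statement does not follow from properness plus Fatou alone.
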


With the dual representation we can derive a complementary inequality to subadditivity.

\begin{lemma}\label{thm:subadditivity_complement}
	A proper coherent risk measure with the Fatou property $\rho: L^p \to \bar R$ satisfies
	\[ \rho(X+Y) \geq \rho(X) - \rho(-Y) \qquad \text{for all } X,Y \in L^p. \]
\end{lemma}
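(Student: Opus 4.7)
The plan is to exploit the dual representation from Proposition \ref{thm:coherent_risk_measure_dual}. Since $\rho$ is proper coherent with the Fatou property, there exists a $\sigma(L^q,L^p)$-compact set $\Qc \subseteq \M_1^q(\Omega,\A,\P)$ with
\[ \rho(Z) = \sup_{\Q \in \Qc} \E^\Q[Z], \qquad Z \in L^p, \]
and the supremum is attained. First I would pick $\Q^* \in \Qc$ realizing $\rho(X) = \E^{\Q^*}[X]$. Second, I would apply the dual representation to $X+Y$ at this particular $\Q^*$ to get the lower bound
\[ \rho(X+Y) \geq \E^{\Q^*}[X+Y] = \E^{\Q^*}[X] + \E^{\Q^*}[Y] = \rho(X) + \E^{\Q^*}[Y]. \]
Third, I would bound $\E^{\Q^*}[Y]$ from below by passing to the infimum:
\[ \E^{\Q^*}[Y] \geq \inf_{\Q \in \Qc} \E^\Q[Y] = -\sup_{\Q \in \Qc} \E^\Q[-Y] = -\rho(-Y), \]
and chain these two bounds to obtain the claim. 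Conceptually, this is the natural counterpart of subadditivity: subadditivity corresponds to selecting a single $\Q$ attaining the supremum of $\E^\Q[X+Y]$, whereas the reverse direction needs the freedom to pick the worst-case $\Q$ for $X$ on its own and then accept whatever (possibly suboptimal) value this $\Q$ produces for $Y$.

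As a sanity check, a one-line alternative uses only coherent subadditivity: decompose $X = (X+Y) + (-Y)$ so that $\rho(X) \leq \rho(X+Y) + \rho(-Y)$ and rearrange. This route bypasses the Fatou property entirely, but it makes the geometric content less transparent and I would only fall back on it to handle corner cases of the dual argument.

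The only real obstacle is extended-real arithmetic: if $\rho(X) = +\infty$ the attainment in the dual representation has to be interpreted through a maximizing sequence $\{\Q_n\} \subseteq \Qc$ with $\E^{\Q_n}[X] \to +\infty$, and one passes to the limit in $\rho(X+Y) \geq \E^{\Q_n}[X] + \E^{\Q_n}[Y] \geq \E^{\Q_n}[X] - \rho(-Y)$; if $\rho(-Y) = +\infty$ the inequality is vacuous because $\rho$ is proper and hence never equals $-\infty$. These edge cases are routine to verify but deserve a brief mention in the written proof.
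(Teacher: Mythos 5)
Your main argument is correct and is essentially the paper's proof: both insert the dual representation of Proposition \ref{thm:coherent_risk_measure_dual} into $\rho(X+Y)$ and bound the $Y$-term by $\inf_{\Q\in\Qc}\E^\Q[Y]=-\rho(-Y)$, your only cosmetic difference being that you fix a maximizing $\Q^*$ for $X$ instead of invoking $\sup(f+g)\geq\sup f+\inf g$ directly. Your side remark that the inequality also follows from subadditivity alone, via $\rho(X)=\rho\big((X+Y)+(-Y)\big)\leq\rho(X+Y)+\rho(-Y)$, is a valid shortcut, but the route you actually develop matches the paper.
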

\begin{proof}
	By Proposition \ref{thm:coherent_risk_measure_dual} it holds for $X,Y \in L^p$
	\begin{align*}
	\rho(X+Y) &= \sup_{\Q \in \Qc} \E^\Q[X+Y]
	= \sup_{\Q \in \Qc} \Big( \E^\Q[X] + \E^\Q[Y] \Big)\\
	&\geq \sup_{\Q \in \Qc}  \E^\Q[X] + \inf_{\Q \in \Qc} \E^\Q[Y] 
	= \rho(X) - \sup_{\Q \in \Qc} \E^\Q[-Y]  \\
	&= \rho(X) - \rho(-Y). \qedhere
	\end{align*}
\end{proof} 

In the following, $F_X(x)=\P(X\leq x)$ denotes the distribution function, $S_X(x)=1-F_X(x), \ x \in \R,$ the survival function and $\q_X(u)=\inf\{x \in \R: F_X(x)\geq u\}, \ u \in [0,1]$, the quantile function of a random variable $X$. Many established risk measures belong to the large class of distortion risk measures.

\begin{definition}\label{def:dist-rm}
	\begin{enumerate}
		\item An increasing function $g:[0,1] \to [0,1]$ with $g(0)=0$ and $g(1)=1$ is called \emph{distortion function}.
		\item The \emph{distortion risk measure} w.r.t.\ a distortion function $g$ is defined by $\rho_g: L^p \to \bar \R$,
		\[\rho_g(X)= \int_0^\infty g(S_X(x))  \dif x  -  \int_{-\infty}^0 1-g(S_X(x)) \dif x \]
		whenever at least one of the integrals is finite.
	\end{enumerate}
\end{definition}

Distortion risk measures have many of the properties introduced in Definition \ref{def:rm_properties}, see e.g.\ \cite{Sereda2010}.
\begin{lemma}\phantomsection \label{thm:dist-rm_properties}
	\begin{enumerate}
		\item Distortion risk measures are law invariant, monotone, translation invariant, normalized, positive homogeneous and comonotonic additive.
		\item A distortion risk measure is subadditive if and only if the distortion function $g$ is concave.
	\end{enumerate}
\end{lemma}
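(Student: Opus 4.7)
The plan is to verify each property directly from the integral definition of $\rho_g$, noting that everything depends on $X$ only through $S_X$ and $g$. Law invariance is then immediate. Normalization follows because $S_0$ equals $0$ on $[0,\infty)$ and $1$ on $(-\infty,0)$: the first integral vanishes since $g(0)=0$, and the second vanishes since $g(1)=1$ makes $1-g\circ S_0$ identically zero on $(-\infty,0)$. Monotonicity uses $X\le Y$ a.s.\ $\Rightarrow S_X\le S_Y$ pointwise, combined with $g$ increasing, so $g\circ S_X \le g\circ S_Y$; this increases the first integral and decreases the subtracted second one. Translation invariance and positive homogeneity reduce to $S_{X+m}(x)=S_X(x-m)$ and $S_{\lambda X}(x)=S_X(x/\lambda)$ (for $\lambda>0$), together with the substitutions $y=x-m$ and $y=x/\lambda$ (with a minor case split according to the sign of $m$ for the first identity); the case $\lambda=0$ follows from normalization.

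The one substantive step in part (a) is comonotonic additivity. The cleanest route is to rewrite $\rho_g$ as a Choquet integral with respect to the distorted capacity $c(A):=g(\P(A))$, or equivalently, via Fubini on each of the two pieces in the definition, in the quantile form
\[ \rho_g(X) \;=\; -\int_0^1 \q_X(u)\, d\tilde g(u), \qquad \tilde g(u):=1-g(1-u). \]
Since $\q_{X+Y}=\q_X+\q_Y$ for comonotonic $X,Y$ (recall that comonotonicity is equivalent to both being increasing functions of a single random variable), additivity on comonotonic pairs is immediate from this representation.

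For part (b), the sufficiency direction (concave $g\Rightarrow\rho_g$ subadditive) follows from the quantile representation together with a Hardy--Littlewood--P\'olya-type rearrangement argument: comonotonic coupling maximizes $\rho_g(X)+\rho_g(Y)$ among all couplings with the given marginals precisely when $g$ is concave, and by comonotonic additivity that maximum equals $\rho_g(X+Y)$. For the necessity direction, one tests on indicator random variables of suitably chosen disjoint events. Using $\rho_g(\1_A)=g(\P(A))$ and applying subadditivity to carefully engineered sums of indicators translates into a midpoint-concavity inequality for $g$ of the form $g(\tfrac{u+v}{2})\ge\tfrac12(g(u)+g(v))$ on $[0,1]$, which together with the monotonicity of $g$ upgrades to full concavity.

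The main obstacle is comonotonic additivity: the elementary properties all reduce to straightforward changes of variables, but establishing additivity on comonotonic pairs really requires passing from the survival-function form to the quantile/Choquet representation, which is the nontrivial identity underlying the whole lemma. The necessity part of (b) is also delicate because one must pick test random variables whose sums have a distribution simple enough that both sides of the subadditivity inequality can be evaluated explicitly in terms of $g$.
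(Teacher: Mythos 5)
There is no proof in the paper to compare against: Lemma \ref{thm:dist-rm_properties} is quoted as a known result, with a pointer to the literature (\cite{Sereda2010}, and for the quantile form \cite{Dhaene2012} in Remark \ref{rem:spectral-rm}). Judged on its own, your sketch is the standard argument from those references and its architecture is sound: law invariance, normalization, monotonicity, translation invariance and positive homogeneity do follow from the survival-function form by exactly the substitutions you indicate; comonotonic additivity follows once $\rho_g$ is recognized as the Choquet integral with respect to $g\circ\P$, equivalently via the quantile form and $\q_{X+Y}=\q_X+\q_Y$ for comonotonic pairs; and your necessity argument in (b) works: taking two events of probability $\tfrac{u+v}{2}$ whose intersection has probability $u$ and whose union has probability $v$, the sum of their indicators is assigned the value $g(u)+g(v)$ by $\rho_g$, while subadditivity bounds it by $2g\big(\tfrac{u+v}{2}\big)$; midpoint concavity plus monotonicity of $g$ then upgrades to concavity.

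Three points need attention. First, your quantile formula has a sign error: with $\bar g(u)=1-g(1-u)$ one has $\rho_g(X)=+\int_0^1\q_X(u)\,\dif\bar g(u)$ (test $g=\id$, which must return $\E[X]$); this is exactly \eqref{eq:dist-rm_Stieltjes}, and the stray minus sign would propagate if you wrote the argument out in full. Second, the necessity direction requires an atomless probability space (or at least events of all the probabilities used) to carry out the indicator construction; this hypothesis is implicit in the literature and should be stated, since on a space with atoms the ``only if'' statement can fail for unattainable probability levels. Third, in the sufficiency direction the sentence ``comonotonic coupling maximizes $\rho_g(X)+\rho_g(Y)$ among all couplings'' is both imprecisely phrased ($\rho_g(X)+\rho_g(Y)$ is law-determined and coupling-independent; what the comonotonic coupling $(X^c,Y^c)$ maximizes is $\rho_g(X'+Y')$) and essentially a restatement of the claim to be proved. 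To make it a proof you must actually invoke the Hardy--Littlewood bound $\E[XW]\le\int_0^1\q_X(u)\q_W(u)\,\dif u$ together with the representation $\rho_\phi(X)=\sup\{\E[XV]:V\sim\phi(U),\ U\sim\mathcal{U}([0,1])\}$ for an increasing spectrum $\phi$, or alternatively the convex-order inequality $X+Y\le_{cx}X^c+Y^c$ combined with convex-order consistency of concave-distortion risk measures; either route is standard, but the step should be named rather than asserted.
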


There is an alternative representation of distortion risk measures in terms of Lebesgue-Stieltjes integrals based on the quantile function in lieu of the survival function of the risk $X$. 

\begin{remark}\label{rem:spectral-rm}
	For a distortion risk measure $\rho_g$ with left-continuous distortion function $g$ it holds
	\begin{align}\label{eq:dist-rm_Stieltjes}
	\rho_g(X)= \int_0^1 \q_X(u)\dif \bar g(u),
	\end{align}
	where $\bar g(u)=1-g(1-u), \ u \in [0,1],$ is the dual distortion function, cf.\ \cite{Dhaene2012}. For a continuous concave distortion function $g:[0,1]\to [0,1]$, the dual distortion function $\bar g: [0,1] \to [0,1]$ is continuous convex and can be written as $\bar g(x)= \int_0^x \phi(s) \dif s$ for an increasing right-continuous function $\phi:[0,1] \to \R_+$, which is called \emph{spectrum}. By the properties of the Lebesgue-Stieltjes integral, \eqref{eq:dist-rm_Stieltjes} can then be written as 
	\begin{align}\label{eq:spectral_rm}
	\rho_{g}(X)=\rho_{\phi}(X)= \int_0^1 \q_X(u) \phi(u) \dif u.
	\end{align}
	Therefore, distortion risk measures with continuous concave distortion function are referred to as \emph{spectral risk measures}. Note that continuity of $g$ is an additional requirement only in $0$, since an increasing concave function on $[0,1]$ is already continuous on $(0,1]$.
\end{remark}

Due to Hölder's inequality, spectral risk measures $\rho_\phi:L^p\to \bar R$ with spectrum $\phi \in L^q$ fulfill
\begin{align*}
\left \lvert \rho_{\phi}(X) \right \rvert&=\left \lvert \int_0^1 \q_X(u) \phi(u) \dif u \right \rvert \leq \int_0^1 |\q_X(u)| \phi(u) \dif u 
= \big(\E|\q_X(U)|^p\big)^{\frac1p}   \big(\E|\phi(U)|^q\big)^{\frac1q} < \infty,
\end{align*}
where $U \sim \mathcal{U}([0,1])$ is arbitrary. Hence, they have the Fatou property by Lemma \ref{thm:finite_convex_fatou}. 

\begin{example}
	The most widely used risk measure in finance and insurance \emph{Value-at-Risk}
	\[ \VaR_{\alpha}(X) = \q_X(\alpha), \qquad \alpha \in (0,1), \]
	is a distortion risk measure with distortion function $g(u)=\1_{(1-\alpha,1]}(u)$. Since the distortion function is not concave, Value-at-Risk is not coherent and especially not a spectral risk measure. The lack of coherence can be overcome by using \emph{Expected Shortfall}
	\[ \ES_{\alpha}(X)= \frac{1}{1-\alpha}\int_{\alpha}^1 \q_X(u) \dif u, \qquad \alpha \in [0,1). \]
	The corresponding distortion function $g(u)=	\min\{\frac{u}{1-\alpha},1\}$ is concave and Expected Shortfall thus coherent. It is also spectral with $\phi(u)=\frac{1}{1-\alpha}\1_{[\alpha,1]}(u)$. Due to the bounded spectrum, $\ES$ has the Fatou property. The well-known \emph{entropic risk measure}
	\[ \rho_\gamma(X)= \frac{1}{\gamma} \log \E\left[ e^{\gamma X}  \right], \qquad \gamma >0, \]
	is an example of a law-invariant and convex monetary risk measure which does not belong to the distortion class. For random variables with existing moment-generating function it has the Fatou property directly by dominated convergence. 
\end{example}

To the best of our knowledge, it has surprisingly not been investigated in the literature whether Value-at-Risk has the Fatou property.
\begin{lemma}
	Value-at-Risk has the Fatou property.
\end{lemma}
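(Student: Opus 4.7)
The plan is to reduce the Fatou property for $\VaR_\alpha$ to the elementary fact that almost sure convergence implies convergence in distribution, together with left-continuity of the quantile and density of continuity points of a distribution function.

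First I would fix $\alpha \in (0,1)$ and set $x_0 := \VaR_\alpha(X) = \q_X(\alpha) = \inf\{x \in \R : F_X(x) \geq \alpha\}$. Since $X \in L^p$ with $\alpha \in (0,1)$, the value $x_0$ is finite. The goal is to show $\liminf_{n \to \infty} \q_{X_n}(\alpha) \geq x_0$; the dominance assumption $|X_n|\leq Y$ in the definition of the Fatou property is actually not needed here and will not be used, since $X_n \to X$ $\P$-a.s.\ already implies convergence in distribution, so that $F_{X_n}(x) \to F_X(x)$ at every continuity point $x$ of $F_X$.

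Next, I would exploit that the set of discontinuities of the increasing function $F_X$ is at most countable, so its continuity points are dense in $\R$. For an arbitrary $x' < x_0$, the definition of $x_0$ gives $F_X(x') < \alpha$; pick a continuity point $x''$ of $F_X$ with $x' < x'' < x_0$, so that $F_X(x'') < \alpha$ as well. By the portmanteau-type convergence above, $F_{X_n}(x'') \to F_X(x'') < \alpha$, hence $F_{X_n}(x'') < \alpha$ for all $n$ sufficiently large. For such $n$, monotonicity of $F_{X_n}$ yields $y \notin \{x : F_{X_n}(x) \geq \alpha\}$ for every $y \leq x''$, and therefore $\q_{X_n}(\alpha) \geq x''$. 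Consequently $\liminf_{n \to \infty} \q_{X_n}(\alpha) \geq x'' > x'$. Since $x' < x_0$ was arbitrary, letting $x' \uparrow x_0$ (through a sequence of continuity points if desired) gives $\liminf_n \q_{X_n}(\alpha) \geq x_0 = \VaR_\alpha(X)$, which is the Fatou inequality.

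The argument is essentially bookkeeping; the only mildly delicate step is finding a continuity point of $F_X$ strictly between $x'$ and $x_0$, which could only fail if $x_0 = x'$, but this is handled automatically by taking $x' < x_0$ and invoking density of continuity points. I do not expect any obstacle, and in fact the proof shows the stronger statement that $\VaR_\alpha$ is lower semicontinuous with respect to $\P$-a.s.\ convergence without any dominating envelope.
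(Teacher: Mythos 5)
Your proof is correct and is essentially the paper's argument: both rest on the fact that a.s.\ convergence implies convergence in distribution, on choosing a continuity point of $F_X$ strictly below $\q_X(\alpha)$ (possible since the discontinuities are at most countable), and on the generalized-inverse characterization of the quantile. The only difference is presentational — you argue directly while the paper argues by contradiction — and your remark that the dominating envelope is not needed applies equally to the paper's proof, which also only uses convergence in distribution.
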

\begin{proof}
	Assume the contrary. Then there exists a sequence $\{X_n\}_{n \in \N} \subseteq L^p$ with $|X_n| \leq Y$ $\P$-a.s.\ for some $Y \in L^p$ and $X_n \to X$ $\P$-a.s.\ for some $X \in L^p$ such that $$\liminf_{n \to \infty} \VaR_{\alpha}(X_n) < \VaR_{\alpha}(X).$$ I.e.\ there is an $\epsilon >0$ such that for every $\delta \in (0,\epsilon)$
	\[ \liminf_{n \to \infty} \q_{X_n}(\alpha) \leq  \q_X(\alpha) - \delta. \] 
	Hence, there exists a subsequence $\{\q_{X_{N_k}}(\alpha)\}_{k \in \N}$ such that for all $k \in \N$ and $\delta \in (0,\epsilon)$
	\[ \q_{X_{n_k}}(\alpha) \leq  \q_X(\alpha) - \delta   \]
	or equivalently by the properties of generalized inverses $\alpha \leq  F_{X_{n_k}}(\q_X(\alpha) - \delta)$.
	Since $F_X$ has at most countably many discontinuities, we can choose $ \delta_0 \in (0,\epsilon)$ such that $\q_X(\alpha) -  \delta_0$ is a point of continuity of $F_X$. Then, by the definition of convergence in distribution
	\[ \alpha \leq \lim_{k \to \infty} F_{X_{n_k}}(\q_X(\alpha) - \delta_0) = F_{X}(\q_X(\alpha) - \delta_0). \]
	Again  by the properties of generalized inverses, this is equivalent to $\q_X(\alpha) \leq \q_X(\alpha) - \delta_0$, 
	a contradiction.
\end{proof}

\section{The Markov Decision Model}\label{sec:decision_model}

We consider the following standard Markov Decision Process with general Borel state and action spaces. The \emph{state space} $E$ is a Borel space with Borel $\sigma$-algebra $\B(E)$ and the \emph{action space} $A$ is a Borel space with Borel $\sigma$-Algebra $\B(A)$. The possible state-action combinations at time $n$ form a measurable subset $D_n$  of $E \times A$ such that $D_n$ contains the graph of a measurable mapping $E \to A$. The $x$-section of $D_n$,  
\[ D_n(x) = \{ a \in A: (x,a) \in D_n \}, \]
is the set of admissible actions in state $x \in E$ at time $n$. Note that the sets $D_n(x)$ are non-empty. We assume that the dynamics of the MDP are given by measurable \emph{transition functions} $T_n:D_n \times \Z \to E$ and depend on 
\emph{disturbances} $Z_1,Z_2,\dots$ which are independent random elements on a common probability space $(\Omega,\A,\P)$ with values in a measurable space $(\Z, \Zf)$. When the current state is $x_n$, the controller chooses action $a_n\in D_n(x_n)$ and $z_{n+1}$ is the realization of $Z_{n+1}$, then the next state is given by
\[ x_{n+1} = T_n(x_n,a_n,z_{n+1}). \] 
The \emph{one-stage cost function} $c_n:D_n\times E \to \R$ gives the cost $c_n(x,a,x')$  for choosing action $a$ if the system is in state $x$ at time $n$ and the next state is $x'$. The \emph{terminal cost function} $c_N: E \to \R$ gives the cost $c_N(x)$ if the system terminates in state $x$.

The model data is supposed to have the following continuity and compactness properties. 

\begin{assumption}\phantomsection\label{ass:continuity_compactness}
	\begin{enumerate}
		\item[(i)]  The sets $D_n(x)$ are compact and  $E \ni x \mapsto D_n(x)$ are upper semicontinuous, i.e. if $x_k\to x$ and $a_k\in D_n(x_k)$, $k\in\N$, then $(a_k)$ has an accumulation point in $D_n(x)$.
		\item[(ii)] The transition functions $T_n$ are continuous in $(x,a)$.
		\item[(iii)] The one-stage cost functions $c_n$ and the terminal cost function $c_N$ are lower semicontinuous. 
	\end{enumerate}
\end{assumption}

Under a finite planning horizon $N \in \N$, we consider the model data for $n=0,\dots,N-1$. The decision model is called \emph{stationary} if $D,\ T$ do not depend on $n$, the disturbances are identically distributed, the one-stage cost functions are of the form $c_n=\beta^n c$, and the terminal cost function is $\beta^N c_N$, where $\beta \in (0,1]$ is a discount factor. In that case, $Z$ denotes a representative of the disturbance distribution. For a non-stationary model one may think of the discount factor being included in the cost functions. If the model is stationary and the terminal cost is zero, we allow for an \emph{infinite time horizon} $N=\infty$.

For $n \in \N_0$ we denote by $\H_n$ the set of \emph{feasible histories} of the decision process up to time $n$
\begin{align*}
h_n = \begin{cases}
x_0, & \text{if } n=0,\\
(x_0,a_0,x_1, \dots, x_n), & \text{if } n \geq 1,
\end{cases}
\end{align*}
where $a_k \in D_k(x_k)$ for $k \in \N_0$. In order for the controller's decisions to be implementable, they must be based on the information available at the time of decision making, i.e.\ be functions of the history of the decision process. 
\begin{definition}
	\begin{enumerate}
		\item A measurable mapping $d_n: \mathcal{H}_n \to A$ with $d_n(h_n) \in D_n(x_n)$ for every $h_n \in \mathcal{H}_n$ is called  \emph{decision rule} at time $n$. A finite sequence $\pi=(d_0, \dots,d_{N-1})$ is called \emph{$N$-stage policy} and a sequence $\pi=(d_0, d_1, \dots)$ is called \emph{policy}.
		\item A decision rule at time $n$ is called \emph{Markov} if it  depends on the current state only, i.e.\ $d_n(h_n)=d_n(x_n)$ for all $h_n \in \mathcal{H}_n$. If all decision rules are Markov, the ($N$-stage) policy is called \emph{Markov}.
		\item An ($N$-stage) policy $\pi$ is called \emph{stationary} if $\pi=(d, \dots,d)$ or $\pi=(d,d,\dots)$, respectively, for some Markov decision rule $d$.
	\end{enumerate}
\end{definition}
With $\Pi \supseteq \Pi^M \supseteq \Pi^S$ we denote the sets of all policies, Markov policies and stationary policies. It will be clear from the context if $N$-stage or infinite stage policies are meant. An admissible policy always exists as $D$ contains the graph of a measurable mapping.

Since risk measures are defined as real-valued mappings of random variables, we will work with a functional representation of the decision process. The law of motion does not need to be specified explicitly. We define for an initial state $x_0 \in E$ and a policy $\pi \in \Pi$
\begin{align*}
X^\pi_0=x_0, \qquad X^\pi_{n+1}= T(X_n^\pi,d_n(H_n^\pi),Z_{n+1}).
\end{align*}
Here, the process $(H_n^\pi)_{n \in \N_0}$ denotes the history of the decision process viewed as a random element, i.e.
\begin{align*}
H_0^\pi=x_0, \quad
H_1^\pi=\big(X_0^\pi,d_0(X_0^\pi),X_1^\pi\big),\quad \dots, \quad H_{n}^\pi=(H_{n-1}^\pi,d_{n-1}(H_{n-1}^\pi),X_{n}^\pi).
\end{align*}
Under a Markov policy the recourse on the random history of the decision process is not needed.

\section{Cost Minimization under a Finite Planning Horizon}\label{sec:finite}

For a finite planning horizon $N \in \N$, we consider the non-stationary decision model. In the classical context of the risk-neutral expected cost criterion, the value of a policy $\pi \in \Pi$ at time $n=0,\dots,N$ given $h_n \in \H_n$ is defined as
\[ V_{n\pi}(h_n)= \E_{nh_n}\left[ \sum_{k=n}^{N-1} c_k(X_k^\pi,d_k(H_k^\pi),X_{k+1}^\pi) + c_N(X_N^\pi) \right], \qquad n=0,\dots,N, \]
where $\E_{nh_n}$ is the conditional expectation given $H_n^\pi=h_n$. Under suitable integrability conditions, $V_{n\pi}$ satisfies the value iteration
\[ V_{n\pi}(h_n)= \E\Big[ c_n\big(x_n,d_n(h_n),T_n(x_n,d_n(h_n),Z_{n+1})\big) + V_{n+1\pi}\big(h_n,d_n(h_n),T_n(x_n,d_n(h_n),Z_{n+1})\big) \Big], \] 
see e.g.\ Theorem 2.3.4 in \cite{BaeuerleRieder2011}. In order to take risk-sensitive preferences of the controller into account, the approach here is to replace the factorization of conditional expectation in the value iteration by a risk measure, meaning that static risk measures are recursively applied at each stage. In the special case of the entropic risk measure, this approach has been studied by  \cite{AsienkiewiczJaskiewicz2017} in an abstract setting and by \cite{BaeuerleJaskiewicz2017,BaeuerleJaskiewicz2018} in applications to optimal dividend payments and stochastic optimal growth. Their choice of the risk measure is motivated by the fact that the entropic risk measure coincides with the certainty equivalent of an exponential utility function. In the economic literature, recursive utilities have been widely studied. For a literature overview we refer the reader to  \cite{Miao2014}. 

Let $p \in [1,\infty)$ with conjugate index $q \in [1,\infty]$ and let $\rho_0,\dots,\rho_{N-1}: L^p(\Omega,\A,\P) \to \bar \R$ be monetary risk measures. We define the \emph{value of a policy} $\pi = (d_0,\dots,d_{N-1}) \in \Pi$ at time $n = 0,\dots, N$ given history $h_n \in \mathcal{H}_n$ recursively as 
\begin{align*}
V_{N\pi}(h_N) &=c_N(x_N), \\
V_{n\pi}(h_n)&= \rho_n\Big( c_n\big(x_n,d_n(h_n),T_n(x_n,d_n(h_n),Z_{n+1})\big) \! + \!  V_{n+1\pi}\big(h_n,d_n(h_n),T_n(x_n,d_n(h_n),Z_{n+1})\big) \Big). 
\end{align*}
In the special case that the one-stage cost functions $c_n$ do not depend on the next state of the decision process, the value of a policy simplifies to
\[ V_{n\pi}(h_n) = c_n(x_n,d_n(h_n)) + \rho_n\big( V_{n+1\pi}(h_n,d_n(h_n),X_{n+1}^\pi) \big), \qquad h_n \in \H_n, \]
due to the translation invariance of monetary risk measures. 

\begin{remark}\label{rem:measurability}
	For the recursive definition of the policy values to be meaningful, we need to make sure that the risk measures are applied to elements of $L^p(\Omega,\A,\P)$. This has two aspects: integrability will be ensured by Assumption \ref{ass:finite}, but first of all $V_{n\pi}$ needs to be a measurable function for all $\pi \in \Pi$ and $n=0,\dots,N$. For most risk measures with practical relevance, this is fulfilled: 
	\begin{itemize}
		\item In the risk-neutral case, i.e.\ for $\rho=\E$, and also for the entropic risk measure $\rho_\gamma$ the measurability is obvious.
		\item For distortion risk measures, the measurability is guaranteed, too. To see this, we proceed backwards. For $N$ there is noting to show and if $V_{n+1\pi}$ is measurable, the function
		\[ f(h_n,z) = c_n\big(x_n,d_n(h_n),T_n(x_n,d_n(h_n),z)\big) \! + \!  V_{n+1\pi}\big(h_n,d_n(h_n),T_n(x_n,d_n(h_n),z) \]
		is measurable as a composition of measurable maps. Then, Fubini's theorem yields that the survival function of $f(h_n,Z_{n+1})$
		\[ S(t|h_n) = \int \1\{ f(h_n,Z_{n+1}(\omega)) > t \} \P(\dif \omega) \] 
		is measurable. A distortion function $g$ is increasing and hence measurable. So again by Fubini's theorem we obtain the measurability of 
		\begin{align*}
			V_{n\pi}(h_n)= \rho_g(f(h_n,Z_{n+1})) = \int_0^\infty g(S(t|h_n))  \dif t  -  \int_{-\infty}^0 1-g(S(t|h_n)) \dif t
		\end{align*}
		since the integrands are non-negative and compositions of measurable maps. 
		\item For proper coherent risk measures with the Fatou property one can insert the dual representation of Proposition \ref{thm:coherent_risk_measure_dual}. Then, an optimal measurable selection argument as in Theorem 3.6 in \cite{BauerleGlauner2020} yields the measurability.
	\end{itemize}
	Throughout, it is implicitly assumed that the risk measures are chosen such that all policy values are measurable.
\end{remark}

The \emph{value functions} are given by 
\[ V_n(h_n) = \inf_{\pi \in \Pi}  \ V_{n\pi}(h_n), \qquad h_n \in \mathcal{H}_n, \]
for $n=0,\dots,N$ and the controller's optimization objective is 
\begin{align*}
V_0(x)=\inf_{\pi \in \Pi} \ V_{0\pi}(x), \qquad x \in E.
\end{align*}

In order to have well-defined value functions, we need some finiteness conditions instead of the usual integrability conditions. Moreover, we require some basic properties for the risk measures.

\begin{assumption}\phantomsection\label{ass:finite}
	\begin{enumerate}
		\item[(i)] There exist $\ubar \epsilon, \bar \epsilon \geq 0$ with $\ubar \epsilon+ \bar \epsilon =1$ and measurable functions $\ubar \b: E \to (-\infty, - \ubar \epsilon]$ and $\bar \b: E \to [\bar \epsilon, \infty)$ such that it holds for all policies $\pi \in \Pi$ and all $n=0,\dots,N$
		\[ \ubar \b(x_n) \leq V_{n\pi}(h_n) \leq \bar \b(x_n), \qquad h_n \in \mathcal{H}_n. \]
		\item[(ii)] We define $\b:E \to [1,\infty), \ \b(x)=\bar \b(x) - \ubar \b(x)$. For all $n=0,\dots,N-1$ and $(\bar x, \bar a) \in D_n$ there exists an $\epsilon >0$ and measurable functions $\Theta_{n,1}^{\bar x, \bar a}, \Theta_{n,2}^{\bar x, \bar a}: \Z \to \R_+$ such that $\Theta_{n,1}^{\bar x, \bar a}(Z_{n+1}), \Theta_{n,2}^{\bar x, \bar a}(Z_{n+1}) \in L^p(\Omega,\A,\P)$ and
		\begin{align*}
		|c_n(x,a,T_n(x,a,z))| \leq \Theta_{n,1}^{\bar x, \bar a}(z), \qquad \qquad  \b(T_n(x,a,z)) \leq \Theta_{n,2}^{\bar x, \bar a}(z)
		\end{align*}
		for all  $z \in \Z$ and $(x,a) \in B_\epsilon(\bar x, \bar a) \cap D_n$. Here, $B_\epsilon(\bar x, \bar a)$ is the closed ball around $(\bar x, \bar a)$ w.r.t.\ an arbitrary product metric on $E\times A$.
		\item[(iii)] The monetary risk measures $\rho_0,\dots,\rho_{N-1}: L^p(\Omega,\A,\P) \to \bar \R$ are law invariant and have the Fatou property.
	\end{enumerate}
\end{assumption}

$\ubar \b, \bar \b$ are called \emph{(global) lower} and \emph{upper bounding function}, respectively, while $\b$ is referred to as \emph{(global) bounding function}. Since  $\ubar \b$ is non-positive and $\bar \b$ is non-negative it holds
\[ \ubar \b(x_n) \leq - V_{n\pi}^-(h_n) \leq V_{n\pi}(h_n) \leq V_{n\pi}^+(h_n) \leq \bar \b(x_n), \qquad h_n \in \mathcal{H}_n, \]
and consequently $|V_{n\pi}(h_n)| \leq \b(x_n)$. Bold print is used to distinguish these global bounding functions from the usual local (stage-wise) bounding functions used for risk-neutral MDP. Such local bounding functions can be introduced for the risk-sensitive recursive optimality criterion, too, if the risk measures have additional properties. Note that without any further properties on the risk measure we cannot construct global bounding functions from local ones.

\begin{lemma}\label{thm:stagewise_bound}
	Let $\rho_0,\dots,\rho_{N-1}$ be proper coherent risk measures with the Fatou property. If there exist  $\ubar \epsilon, \bar \epsilon \geq 0$ with $\ubar \epsilon+ \bar \epsilon =1$, measurable functions $\ubar b:E \to (-\infty, -\ubar \epsilon]$, $\bar b: E \to [\bar \epsilon,\infty)$ and a constant $\alpha \in (0,1)$ such that
	\begin{align*}
	\rho_n\big( c_n(x,a,T_n(x,a,Z_{n+1}))\big)& \geq \ubar b(x), & \rho_n\big( - \ubar b(T_n(x,a,Z_{n+1}))\big) &\leq - \alpha \ubar b(x),\\
	\rho_n\big( c_n(x,a,T_n(x,a,Z_{n+1}))\big) &\leq \bar b(x), & \rho_n\big( \bar b(T_n(x,a,Z_{n+1}))\big) &\leq \alpha \bar b(x),
	\end{align*}
	for all $n=0,\dots,N-1$ and $(x,a) \in D_n$ as well as $\ubar b(x) \leq c_N(x) \leq \bar b(x)$ for all $x \in E$, then
	\begin{align*}
	\ubar \b = \frac{1}{1-\alpha}\ubar b, \qquad \bar \b = \frac{1}{1-\alpha}\bar b \qquad \text{and} \qquad \b = \frac{1}{1-\alpha} b
	\end{align*}
	are global bounding functions satisfying Assumption \ref{ass:finite} (i).
\end{lemma}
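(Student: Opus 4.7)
My plan is to prove the two-sided inequality $\ubar\b(x_n)\le V_{n\pi}(h_n)\le\bar\b(x_n)$ by backward induction on $n$, and then check that $\ubar\b,\bar\b,\b$ satisfy the sign/range conditions from Assumption \ref{ass:finite} (i). The base case $n=N$ is immediate: since $\alpha\in(0,1)$, $\frac{1}{1-\alpha}\ge 1$, and because $\ubar b\le 0$ and $\bar b\ge 0$, we get $\ubar\b\le\ubar b\le c_N\le\bar b\le\bar\b$. The inductive step is where the interplay between coherence, translation invariance and the contraction constant $\alpha$ enters.

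For the inductive step, assume the claim at stage $n+1$. Writing $a=d_n(h_n)$ and $X_c=c_n(x_n,a,T_n(x_n,a,Z_{n+1}))$, $X_V=V_{n+1,\pi}(h_n,a,T_n(x_n,a,Z_{n+1}))$, we have $V_{n\pi}(h_n)=\rho_n(X_c+X_V)$. For the \emph{upper} bound I would use subadditivity of the coherent $\rho_n$, then monotonicity (with the induction hypothesis $X_V\le\bar\b(T_n(\cdots))$), then positive homogeneity to pull out the factor $\frac{1}{1-\alpha}$, and finally the hypothesized stage-wise estimates of the lemma:
\begin{align*}
\rho_n(X_c+X_V)&\le\rho_n(X_c)+\rho_n(X_V)\le\bar b(x_n)+\tfrac{1}{1-\alpha}\rho_n\bigl(\bar b(T_n(x_n,a,Z_{n+1}))\bigr)\\
&\le\bar b(x_n)+\tfrac{\alpha}{1-\alpha}\bar b(x_n)=\tfrac{1}{1-\alpha}\bar b(x_n)=\bar\b(x_n).
\end{align*}

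The \emph{lower} bound is the main obstacle, because coherence only gives subadditivity in one direction. Here I would invoke Lemma \ref{thm:subadditivity_complement}, which provides exactly the reverse inequality $\rho_n(X_c+X_V)\ge\rho_n(X_c)-\rho_n(-X_V)$. By the induction hypothesis $-X_V\le-\ubar\b(T_n(\cdots))=-\tfrac{1}{1-\alpha}\ubar b(T_n(\cdots))$, so using monotonicity, positive homogeneity and the second lower hypothesis,
\begin{align*}
\rho_n(-X_V)\le\tfrac{1}{1-\alpha}\rho_n\bigl(-\ubar b(T_n(x_n,a,Z_{n+1}))\bigr)\le\tfrac{-\alpha}{1-\alpha}\ubar b(x_n),
\end{align*}
(note $\ubar b(x_n)<0$ makes the right side positive). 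Combining, $V_{n\pi}(h_n)\ge\ubar b(x_n)+\tfrac{\alpha}{1-\alpha}\ubar b(x_n)=\tfrac{1}{1-\alpha}\ubar b(x_n)=\ubar\b(x_n)$.

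It remains to verify that $\ubar\b,\bar\b$ have the correct ranges. Since $\tfrac{1}{1-\alpha}\ge 1$, $\ubar b\le-\ubar\epsilon$, and $\bar b\ge\bar\epsilon$, we obtain $\ubar\b\le-\ubar\epsilon$ and $\bar\b\ge\bar\epsilon$. Finally, $\b=\bar\b-\ubar\b=\tfrac{1}{1-\alpha}(\bar b-\ubar b)=\tfrac{1}{1-\alpha}b\ge\tfrac{1}{1-\alpha}(\bar\epsilon+\ubar\epsilon)=\tfrac{1}{1-\alpha}\ge 1$, so $\b:E\to[1,\infty)$ as required. Measurability is preserved throughout since we only scale measurable functions.
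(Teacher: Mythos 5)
Your proof is correct and follows essentially the same route as the paper: backward induction with the base case at $N$, subadditivity plus positive homogeneity for the upper bound, and Lemma \ref{thm:subadditivity_complement} (with monotonicity and positive homogeneity) for the lower bound; the only difference is the trivial reordering of when monotonicity is applied, plus your explicit (and welcome) check of the range conditions in Assumption \ref{ass:finite} (i).
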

\begin{proof}
	We proceed by backward induction. At time $N$ we have 
	\begin{align*}
	\ubar \b(x_N) \leq \ubar b(x_N) \leq c_N(x_N) \leq \bar b(x_N) \leq \bar \b(x_N), \qquad h_N \in \H_N.
	\end{align*}
	Assuming the assertion holds for time $n+1$ it follows for time $n$:
	\begin{align*}
	V_{n\pi}(h_n) &= \rho_n\Big( c_n\big(x_n,d_n(h_n),T_n(x_n,d_n(h_n),Z_{n+1})\big)+ V_{n+1\pi}\big(h_n,d_n(h_n),T_n(x_n,d_n(h_n),Z_{n+1})\big) \Big)\\
	&\geq \rho_n\Big( c_n\big(x_n,d_n(h_n),T_n(x_n,d_n(h_n),Z_{n+1})\big) + \frac{1}{1-\alpha} \ubar b\big(T_n(x_n,d_n(h_n),Z_{n+1})\big) \Big)\\
	&\geq \rho_n\Big( c_n\big(x_n,d_n(h_n),T_n(x_n,d_n(h_n),Z_{n+1})\big)\Big) - \frac{1}{1-\alpha} \rho_n \Big( - \ubar b\big(T_n(x_n,d_n(h_n),Z_{n+1})\big) \Big)\\
	&\geq \ubar b(x_n) + \frac{\alpha}{1-\alpha} \ubar b(x_n) = \ubar \b(x_n).
	\end{align*}
	The second inequality is by Lemma \ref{thm:subadditivity_complement}. Regarding the upper bounding function one can argue similarly using the subadditivity of $\rho_n$ instead. 
	\begin{align*}
	V_{n\pi}(h_n) &= \rho_n\Big( c_n\big(x_n,d_n(h_n),T_n(x_n,d_n(h_n),Z_{n+1})\big) + V_{n+1\pi}\big(h_n,d_n(h_n),T_n(x_n,d_n(h_n),Z_{n+1})\big) \Big)\\
	&\leq \rho_n\Big( c_n\big(x_n,d_n(h_n),T_n(x_n,d_n(h_n),Z_{n+1})\big) + \frac{1}{1-\alpha} \bar b\big(T_n(x_n,d_n(h_n),Z_{n+1})\big) \Big)\\
	&\leq \rho_n\Big( c_n\big(x_n,d_n(h_n),T_n(x_n,d_n(h_n),Z_{n+1})\big)\Big) + \frac{1}{1-\alpha} \rho_n \Big(\bar b\big(T_n(x_n,d_n(h_n),Z_{n+1})\big) \Big)\\
	&\leq \bar b(x_n) + \frac{\alpha}{1-\alpha} \bar b(x_n) = \bar \b(x_n). \qedhere
	\end{align*} 	
\end{proof}

\begin{remark}\phantomsection\label{rem:stagewise_bound}
	\begin{enumerate}
		\item Concerning the requirements on a local lower bounding function in Lemma \ref{thm:stagewise_bound} it should be noted that $\rho_n\big( - \ubar b(T_n(x,a,Z_{n+1}))\big) \leq - \alpha \ubar b(x)$ is a stronger assumption than
		\begin{align}\label{eq:stagewise_bound_weaker}
		\rho_n\big(\ubar b(T_n(x,a,Z_{n+1}))\big) \geq \alpha \ubar b(x).
		\end{align}  
		Indeed, since $\ubar b \leq 0$ the monotonicity and normalization of $\rho_n$ yields $\rho_n\big(\ubar b(T_n(x,a,Z_{n+1}))\big) \leq 0$. Consequently, we have by Lemma \ref{thm:coherent_triangular}
		\begin{align*}
		-\rho_n\big(\ubar b(T_n(x,a,Z_{n+1}))\big) &= \left\lvert \rho_n\big(\ubar b(T_n(x,a,Z_{n+1}))\big) \right\rvert
		\leq \rho_n\Big(\left\lvert \ubar b \big( T_n(x,a,Z_{n+1}) \big) \right\rvert\Big)\\
		&= \rho_n\Big(- \ubar b \big( T_n(x,a,Z_{n+1}) \big) \Big) \leq -\alpha \ubar b(x).
		\end{align*}
		Multiplying with $(-1)$ yields \eqref{eq:stagewise_bound_weaker}.
		\item If the one-stage cost functions are bounded and the monetary risk measures $\rho_0,\dots, \rho_{N-1}$ normalized, the local bounding functions $\ubar b, \bar b$ can be chosen constant. Where we have used Lemma \ref{thm:subadditivity_complement} or subadditivity in the proof of Lemma \ref{thm:stagewise_bound}, one can then simply argue with translation invariance. Note that normalization is no structural restriction for monetary risk measures due to the translation invariance.
	\end{enumerate}
\end{remark}

With the bounding function $\b$ we define the function space
\[ \BB_\b = \left\{ v: E \to \R \ \vert \ v \text{ measurable with } \lambda \in \R_+ \text{ s.t. } |v(x)| \leq \lambda \,  \b(x) \text{ for all } x \in E \right\}. \]
Endowing $\BB_\b$ with the weighted supremum norm
\[ \|v\|_\b = \sup_{x \in E} \frac{|v(x)|}{\b(x)} \]
makes $(\BB_\b, \|\cdot\|_\b )$ a Banach space, cf. Proposition 7.2.1 in \cite{HernandezLasserre1999}. In case we have local bounding functions as in Lemma \ref{thm:stagewise_bound}, it holds
\begin{align*}
\BB_\b &= \left\{ v: E \to \R \ \vert \ v \text{ measurable with } \lambda \in \R_+ \text{ s.t. } |v(x)| \leq \lambda \,  \b(x) \text{ for all } x \in E \right\}\\
&= \left\{ v: E \to \R \ \vert \ v \text{ measurable with } \lambda \in \R_+ \text{ s.t. } |v(x)| \leq \lambda \,  b(x) \text{ for all } x \in E \right\}\\
&= \BB_b
\end{align*} 
and the weighted supremum norms $\|\cdot \|_\b, \ \|\cdot \|_b$ are equivalent. 

\begin{lemma}\label{thm:Lpbound}
	Let $v \in \BB_\b$ and $n \in \{0,\dots,N-1\}$. Under Assumptions \ref{ass:continuity_compactness} (i) and  \ref{ass:finite} (ii) each sequence of random variables
	\[ C_k= c_n\big(x_k,a_k,T_n(x_k,a_k,Z_{n+1})\big) + v\big(T_n(x_k,a_k,Z_{n+1})\big) \] 
	induced by a convergent sequence $\{(x_k,a_k)\}_{k \in \N}$ in $D_n$ has an $L^p$-bound $\bar C$, i.e.\ $|C_k| \leq \bar C \in L^p(\Omega,\A,\P)$ for all $k \in \N$. 
\end{lemma}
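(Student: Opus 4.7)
The plan is to exploit the local $L^p$-domination hypothesis in Assumption~\ref{ass:finite}~(ii) at the limit of the sequence $(x_k,a_k)$, combined with the weighted sup-norm control provided by $v\in\BB_\b$. First, let $(\bar x,\bar a)$ denote the limit of $(x_k,a_k)$ in $E\times A$. Since $x\mapsto D_n(x)$ is upper semicontinuous with compact values, its graph $D_n$ is closed, so $(\bar x,\bar a)\in D_n$ and Assumption~\ref{ass:finite}~(ii) applies at $(\bar x,\bar a)$, yielding an $\epsilon>0$ and functions $\Theta_{n,1}^{\bar x,\bar a},\Theta_{n,2}^{\bar x,\bar a}:\Z\to\R_+$ with $\Theta_{n,1}^{\bar x,\bar a}(Z_{n+1}),\Theta_{n,2}^{\bar x,\bar a}(Z_{n+1})\in L^p(\Omega,\A,\P)$ that bound $|c_n(x,a,T_n(x,a,\cdot))|$ and $\b(T_n(x,a,\cdot))$ uniformly for all $(x,a)\in B_\epsilon(\bar x,\bar a)\cap D_n$.

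Next, because $v\in\BB_\b$ there is $\lambda\in\R_+$ with $|v|\leq\lambda\b$ pointwise, so for every $(x,a)\in B_\epsilon(\bar x,\bar a)\cap D_n$ and every $z\in\Z$,
\[
\bigl|c_n(x,a,T_n(x,a,z))+v(T_n(x,a,z))\bigr|\leq \Theta_{n,1}^{\bar x,\bar a}(z)+\lambda\,\Theta_{n,2}^{\bar x,\bar a}(z).
\]
Choose $K\in\N$ such that $(x_k,a_k)\in B_\epsilon(\bar x,\bar a)$ for all $k\geq K$; by the estimate above this gives a single $L^p$-dominating random variable for the tail of the sequence.

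For the finitely many indices $k<K$, apply Assumption~\ref{ass:finite}~(ii) individually at each $(x_k,a_k)\in D_n$ to obtain functions $\Theta_{n,1}^{x_k,a_k}(Z_{n+1}),\Theta_{n,2}^{x_k,a_k}(Z_{n+1})\in L^p$ that dominate $|C_k|$. Setting
\[
\bar C := \Theta_{n,1}^{\bar x,\bar a}(Z_{n+1})+\lambda\,\Theta_{n,2}^{\bar x,\bar a}(Z_{n+1})+\sum_{k<K}\Bigl(\Theta_{n,1}^{x_k,a_k}(Z_{n+1})+\lambda\,\Theta_{n,2}^{x_k,a_k}(Z_{n+1})\Bigr),
\]
yields $\bar C\in L^p$, since $L^p$ is a vector space and the sum is finite, and $|C_k|\leq\bar C$ for every $k\in\N$.

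The only mildly delicate point is the closedness of $D_n$ used to ensure $(\bar x,\bar a)\in D_n$; everything else is a routine combination of the local domination hypothesis with the weighted-sup-norm bound on $v$, plus a finite correction to cover the initial segment of the sequence.
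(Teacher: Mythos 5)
Your proof is correct and follows essentially the same route as the paper's: use closedness of $D_n$ (from Assumption \ref{ass:continuity_compactness} (i)) to place the limit point in $D_n$, apply Assumption \ref{ass:finite} (ii) there to dominate the tail of the sequence, handle the finitely many remaining indices with individual applications of the same assumption, and combine with the bound $|v|\leq\lambda\,\b$. The only cosmetic difference is that you sum the finitely many $L^p$-dominating variables while the paper takes their maximum; both yield a valid $\bar C\in L^p$.
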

\begin{proof}
	There exists a constant $\lambda \in \R_+$ such that $|v|\leq \lambda b$. Since $D_n$ is closed by Lemma A.2.2 in \cite{BaeuerleRieder2011}, the limit point $(x_0,a_0)$ of $\{(x_k,a_k)\}_{k \in \N}$ lies in $D_n$. Let $\epsilon>0$ be the constant from Assumption \ref{ass:finite} (ii) corresponding to $(x_0,a_0)$. Since the sequence is convergent, there exists $m \in \N$ such that $(x_k,a_k) \in B_\epsilon(x_0,a_0) \cap D_n$ for all $k > m$. For the finite number of points $(x_0,a_0),  (x_1,a_1), \dots, (x_m,a_m)$ there exist bounding functions $\Theta_{n,1}^{x_i,a_i}, \Theta_{n,2}^{x_i,a_i}$ by Assumption \ref{ass:finite} (ii). Thus, the random variable 
	\[ \bar C= \max_{i=0,\dots,m} \Big(\Theta_{n,1}^{x_i,a_i}(Z) + \lambda \Theta_{n,2}^{x_i,a_i}(Z)  \Big) \]
	is an $L^p$-bound as desired.
\end{proof}

Let us now consider specifically Markov policies $\pi \in \Pi^M$ of the controller. The subspace 
\[ \BB = \{ v \in \BB_\b : \ v \text{ lower semicontinuous}  \} \]
of $(\BB_\b, \|\cdot\|_\b )$ turns out to be the set of potential value functions under such policies. $(\BB, \|\cdot\|_\b )$ is a complete metric space since the subset of lower semicontinuous functions is closed in $(\BB_\b, \|\cdot\|_\b )$. When we consider intervals $[\ubar v, \bar v] \subseteq \BB$ with $\ubar v,\bar v: E \to \R$ s.t.\ $ \ubar v(x) \leq \bar v(x)$ for all $x \in E$, they are to be understood pointwise
\[ [\ubar v, \bar v] = \{ v \in \BB: \ubar v(x) \leq v(x) \leq \bar v(x) \text{ for all } x \in E \}. \]
Such intervals are closed  even w.r.t.\ pointwise convergence and therefore form a complete metric space as a closed subset of $(\BB, \|\cdot\|_\b )$. In the sequel, the interval
\[ I= \left[ \ubar \b, \bar \b\right] \]
will be of interest. We define the following operators on $\BB_\b$ and especially on $\BB$. 

\begin{definition}\label{def:operators}
	For $v \in \BB_b$ and a Markov decision rule $d$ let
	\begin{align*}
	L_n v (x,a) &= \rho_n\Big( c_n\big(x,a,T_n(x,a,Z_{n+1})\big) + v\big(T_n(x,a,Z_{n+1})\big) \Big),  && (x,a) \in D_n,\\
	\T_{n d} v(x) &= L_n v(x,d(x)), && x \in E,\\
	\T_n  v(x) &= \inf_{a \in D_n(x)}  L_n v (x,a), && x \in E.
	\end{align*}
\end{definition}

Note that the operators are monotone in $v$. Under a Markov policy $\pi=(d_0, \dots, d_{N-1}) \in \Pi^M$, the value iteration can be expressed with the operators. In order to distinguish from the history-dependent case, we denote policy values here with $J$. Setting $J_{N\pi}(x) = c_N(x), \ x \in E$, we obtain for $n=0, \dots, N-1$ and $x \in E$
\begin{align*}
J_{n\pi}(x) &= \rho_n\Big( c_n\big(x,d_n(x),T_n(x,d_n(x),Z_{n+1})\big) +  J_{n+1\pi}\big(T_n(x,d_n(x),Z_{n+1})\big) \Big) = \T_{n d_n} J_{n+1\pi}(x).
\end{align*} 
Let us further define for $n=0, \dots, N-1$ the Markov value function 
\begin{align*}
J_{n}(x) = \inf_{\pi \in \Pi^M} J_{n\pi}(x) , \qquad x \in E.
\end{align*}

The next result shows that $V_n$ satisfies a Bellman equation and proves that an optimal policy exists and is Markov.
\begin{theorem}\label{thm:finite}
	Let Assumptions \ref{ass:continuity_compactness} and \ref{ass:finite} be satisfied. Then, for $n=0, \dots, N$, the value function $V_n$ only depends on $x_n$, i.e.\ $V_n(h_n)=J_n(x_n)$ for all $h_n \in \H_n$, lies in $I=\left[\ubar \b, \bar \b  \right] \subseteq \BB$ and satisfies the Bellman equation
	\begin{align*}
	J_N(x) &= c_N(x),\\
	J_n(x) &= \T_n J_{n+1}(x), \qquad x \in E.
	\end{align*}
	Furthermore, for $n= 0, \dots, N-1$ there exist Markov decision rules $d_n^*$ such that $\T_n J_{n+1}=\T_{n d_n^*} J_{n+1}$ and every sequence of such minimizers constitutes an optimal policy $\pi=(d_0^*,\dots,d_{N-1}^*)$.
\end{theorem}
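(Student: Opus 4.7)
\medskip

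\noindent\textbf{Proof plan.} The plan is to argue by backward induction on $n \in \{N, N-1, \dots, 0\}$, showing simultaneously that (i) $V_n(h_n)=J_n(x_n)$ depends only on the current state, (ii) $J_n \in I \subseteq \BB$, (iii) $J_n = \T_n J_{n+1}$ on $E$ (for $n<N$), and (iv) there is a measurable Markov decision rule $d_n^*$ with $\T_{n d_n^*} J_{n+1} = \T_n J_{n+1}$. The base case $n=N$ is immediate: $V_N(h_N)=c_N(x_N)=J_N(x_N)$ is lower semicontinuous by Assumption~\ref{ass:continuity_compactness}(iii) and lies in $I$ by specializing Assumption~\ref{ass:finite}(i) to $n=N$ (where the inequalities become $\ubar\b \le c_N \le \bar\b$).

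For the inductive step, suppose the assertion holds at stage $n+1$, so that $J_{n+1} \in I \subseteq \BB_\b$ is lower semicontinuous. First I would verify that the operator $L_n J_{n+1}(x,a)$ is well-defined on $D_n$: Lemma~\ref{thm:Lpbound} applied to the single-point sequence $(x,a)$ shows that the random variable $C(x,a) := c_n(x,a,T_n(x,a,Z_{n+1})) + J_{n+1}(T_n(x,a,Z_{n+1}))$ lies in $L^p$, so $\rho_n$ may be applied. The main technical step is to prove that $(x,a) \mapsto L_n J_{n+1}(x,a)$ is lower semicontinuous on $D_n$.

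For this, take any convergent sequence $(x_k,a_k) \to (x_0,a_0)$ in the closed set $D_n$ and set $C_k := c_n(x_k,a_k,T_n(x_k,a_k,Z_{n+1})) + J_{n+1}(T_n(x_k,a_k,Z_{n+1}))$. Continuity of $T_n$ together with lower semicontinuity of $c_n$ and $J_{n+1}$ gives $\liminf_{k\to\infty} C_k \ge C_0$ $\P$-a.s., and Lemma~\ref{thm:Lpbound} provides a common bound $|C_k|\le \bar C \in L^p$. Then the random variables $Y_k := \inf_{j\ge k} C_j$ increase $\P$-a.s.\ to $\liminf_k C_k$ and satisfy $|Y_k|\le \bar C$, so the Fatou property of $\rho_n$ yields
\[
\liminf_{k\to\infty} \rho_n(Y_k) \ge \rho_n\bigl(\liminf_k C_k\bigr) \ge \rho_n(C_0),
\]
where the last step uses monotonicity. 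Since $Y_k \le C_k$, monotonicity also gives $\rho_n(Y_k) \le \rho_n(C_k)$, and combining yields $\liminf_k L_n J_{n+1}(x_k,a_k) \ge L_n J_{n+1}(x_0,a_0)$. I expect this Fatou-type argument to be the main obstacle, since one has only a $\liminf$ rather than $\P$-a.s.\ convergence of the $C_k$ themselves. Law invariance is implicit in applying $\rho_n$ consistently to random variables sharing the disturbance $Z_{n+1}$.

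With $L_n J_{n+1}$ lower semicontinuous and $D_n(x)$ compact and upper semicontinuous in $x$ (Assumption~\ref{ass:continuity_compactness}(i)), a standard measurable selection result (e.g.\ Proposition 2.4.3 in \cite{BaeuerleRieder2011}) supplies a measurable minimizer $d_n^*: E \to A$ with $d_n^*(x) \in D_n(x)$ and $\T_n J_{n+1}(x) = L_n J_{n+1}(x,d_n^*(x))$, and yields lower semicontinuity of $\T_n J_{n+1}$. To identify $V_n$ with $\T_n J_{n+1}$, I would argue both bounds: for the upper bound, paste $d_n^*$ in front of the optimal Markov tail policy $(d_{n+1}^*,\dots,d_{N-1}^*)$ from the inductive hypothesis, which by construction achieves $V_{n\pi^*}(h_n) = \T_{n d_n^*} J_{n+1}(x_n) = \T_n J_{n+1}(x_n)$; for the lower bound, use that for any $\pi \in \Pi$ the inductive hypothesis gives $V_{n+1,\pi}(h_n,d_n(h_n),\cdot) \ge J_{n+1}(\cdot)$, so monotonicity of $\rho_n$ yields $V_{n\pi}(h_n) \ge L_n J_{n+1}(x_n,d_n(h_n)) \ge \T_n J_{n+1}(x_n)$, and taking the infimum over $\pi \in \Pi$ closes the loop. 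Thus $V_n(h_n) = J_n(x_n) = \T_n J_{n+1}(x_n)$, and $J_n \in I$ follows directly from Assumption~\ref{ass:finite}(i) since every policy value satisfies $\ubar\b(x_n) \le V_{n\pi}(h_n) \le \bar\b(x_n)$. Iterating the measurable selection step for $n=0,\dots,N-1$ produces the claimed optimal Markov policy $\pi^*=(d_0^*,\dots,d_{N-1}^*)$.
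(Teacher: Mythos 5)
Your proposal is correct and follows essentially the same route as the paper's proof: backward induction, lower semicontinuity of $(x,a)\mapsto L_n J_{n+1}(x,a)$ established via the tail-infimum random variables $\inf_{j\ge k}C_j$ together with the $L^p$-bound from Lemma \ref{thm:Lpbound} and the Fatou property, measurable selection via Proposition 2.4.3 in \cite{BaeuerleRieder2011}, and the closing two-sided comparison using the optimal Markov tail policy from the induction hypothesis. The only cosmetic difference is that you split the identification $V_n = \T_n J_{n+1}$ into separate upper and lower bounds, while the paper runs a single chain of inequalities that closes on itself; the content is the same.
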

\begin{proof}
	The proof is by backward induction. At time $N$ we have $V_N=J_N=c_N$ which is in $\BB$ by Assumptions \ref{ass:continuity_compactness} (iii) and \ref{ass:finite} (i). Assuming the assertion holds at time $n+1$, we obtain for time $n$:
	\begin{align}
	V_n(h_n) &= \inf_{\pi \in \Pi} V_{n\pi}(h_n)\notag\\
	&= \inf_{\pi \in \Pi} \rho_n\Big( c_n\big(x_n,d_n(h_n),T_n(x_n,d_n(h_n),Z_{n+1})\big) \! + \!  V_{n+1\pi}\big(h_n,d_n(h_n),T_n(x_n,d_n(h_n),Z_{n+1})\big) \Big)\notag\\
	&\geq \inf_{\pi \in \Pi} \rho_n\Big( c_n\big(x_n,d_n(h_n),T_n(x_n,d_n(h_n),Z_{n+1})\big) \! + \!  V_{n+1}\big(h_n,d_n(h_n),T_n(x_n,d_n(h_n),Z_{n+1})\big) \Big)\notag\\
	&= \inf_{\pi \in \Pi} \rho_n\Big( c_n\big(x_n,d_n(h_n),T_n(x_n,d_n(h_n),Z_{n+1})\big) + J_{n+1}\big(T_n(x_n,d_n(h_n),Z_{n+1})\big) \Big).\notag\\
	&= \inf_{a_n \in D(x_n)} \rho_n\Big( c_n\big(x_n,a_n,T_n(x_n,a_n,Z_{n+1})\big) +  J_{n+1}\big(T_n(x_n,a_n,Z_{n+1})\big) \Big).\label{eq:finite_proof_0}
	\end{align}
	The last equality holds since the minimization does not depend on the entire policy but only on $a_n=d_n(h_n)$. Here, objective and constraint depend on the history of the process only through $x_n$. Thus, given existence of a minimizing Markov decision rule $d_n^*$, \eqref{eq:finite_proof_0} equals $\T_{n d_n^*} J_{n+1}(x_n)$. Again by the induction hypothesis there exists an optimal Markov policy $\pi^* \in \Pi^M$ such that $J_{n+1}=J_{n+1\pi^*}$. Hence, we have
	\begin{align*}
		V_n(h_n) \geq \T_{n d_n^*} J_{n+1}(x_n) = \T_{n d_n^*} J_{n+1\pi^*}(x_n) = J_{n\pi^*}(x_n) \geq J_{n}(x_n) \geq V_n(h_n).
	\end{align*}
	It remains to show the existence of a minimizing Markov decision rule $d_n^*$ and that $J_n \in \BB$. We want to apply Proposition 2.4.3 in \cite{BaeuerleRieder2011}. The set-valued mapping $E \ni x\mapsto D_n(x)$ is compact-valued and upper semicontinuous. Next, we show that $D_n \ni (x,a) \mapsto  L_n v (x,a)$ is lower semicontinuous for every $v \in \BB$. Let $\{(x_k,a_k)\}_{k \in \N}$ be a convergent sequence in $D_n$ with limit $(x^*,a^*) \in D_n$. The function $D_n \ni (x,a) \mapsto c_n\big(x,a,T_n(x,a,Z_{n+1}(\omega))\big)+ v\big(T_n(x,a,Z_{n+1}(\omega))\big)$ is lower semicontinuous for every $\omega \in \Omega$ as a composition of a continuous and a lower semicontinuous one. Consequently,
	\begin{align}
	&\lim_{k \to \infty} \inf_{\ell \geq k} c_n\big(x_\ell,a_\ell,T_n(x_\ell,a_\ell,Z_{n+1})\big) + v\big(T_n(x_\ell,a_\ell,Z_{n+1})\big) \notag\\
	&= \liminf_{k \to \infty} c_n\big(x_k,a_k,T_n(x_k,a_k,Z_{n+1})\big) + v\big(T_n(x_k,a_k,Z_{n+1})\big) \notag\\
	&\geq c_n\big(x^*,a^*,T_n(x^*,a^*,Z_{n+1})\big) + v\big(T_n(x^*,a^*,Z_{n+1})\big). \label{eq:finite_proof_2}
	\end{align} 
	The sequence $\{C_k\}_{k \in \N}$ with 
	$$C_k(\omega)= \inf_{\ell \geq k} c_n\big(x_\ell,a_\ell,T(x_\ell,a_\ell,Z_{n+1})\big) + v\big(T_n(x_\ell,a_\ell,Z_{n+1})\big)$$
	is measurable as the $\omega$-wise infimum of a countable number of random variables and increasing for every $\omega \in \Omega$. By Lemma \ref{thm:Lpbound}, there exists a nonnegative random variable $\bar C \in L^p(\Omega,\A,\P)$ such that $|C_k| \leq \bar C$ for all $k \in \N$. Hence, $\{C_k\}_{k \in \N}$ converges almost surely to some $C^* \in L^p(\Omega,\A,\P)$. The Fatou property of the risk measure $\rho_n$ implies
	\begin{align*}
	\liminf_{k \to \infty} L_n v(x_k,a_k)	&= \liminf_{k \to \infty} \rho_n\Big( c_n\big(x_k,a_k,T_n(x_k,a_k,Z_{n+1})\big) +  v\big(T_n(x_k,a_k,Z_{n+1})\big) \Big)\\
	& \geq \liminf_{k \to \infty} \rho_n(C_k)\\
	& \geq \rho_n(C^*)\\
	& \geq \rho_n\Big( c_n\big(x^*,a^*,T_n(x^*,a^*,Z_{n+1})\big) + v\big(T_n(x^*,a^*,Z_{n+1})\big) \Big)\\
	& = L_n v(x^*,a^*). 
	\end{align*}
	The last inequality follows from \eqref{eq:finite_proof_2} and the monotonicity of $\rho_n$. So we have shown the lower semicontinuity of $D_n \ni (x,a) \mapsto  L_n v (x,a)$. Proposition 2.4.3 in \cite{BaeuerleRieder2011} yields the existence of a minimizing Markov decision rule $d_n^*$ and that $J_n=T J_{n+1}$ is lower semicontinuous. Furthermore, $J_n$ is bounded by $\ubar \b$ and $\bar \b$ according to Assumption \ref{ass:finite} (i). Thus, $J_n \in I$ and the proof is complete.
\end{proof}

\section{Cost Minimization under an Infinite Planning Horizon}\label{sec:infinite}

In this section, we consider the risk-sensitive recursive cost minimization problem with an infinite planning horizon. This is reasonable if the terminal period is unknown or if one wants to approximate a model with a large but finite planning horizon. Solving the infinite horizon problem will turn out to be easier since it admits a stationary optimal policy. We study the stationary version of the decision model with no terminal cost. Therefore, the risk measure may no longer vary over time. We  also require coherence as an additional property. Recall that if $\rho$ is finite on $L^p(\Omega,\A,\P)$, the Fatou property is already implied by coherence. Within the class of distortion risk measures requiring coherence essentially means a restriction to spectral risk measures. For spectral risk measures, finiteness is guaranteed if the spectrum $\phi$ lies in $L^q$. Due to coherence we can work with local bounding functions, see Lemma \ref{thm:stagewise_bound}. We will see that if the one-stage cost function is bounded, coherence can be dropped as a requirement on the risk measure. Then, all distortion risk measures with the Fatou property are admissible. For clarity, all assumptions of this section are summarized below.

\begin{assumption}\phantomsection \label{ass:infinite}
	\begin{enumerate}
		\item[(i)] There exist  $\alpha, \ubar \epsilon, \bar \epsilon \geq 0$ with $\ubar \epsilon+ \bar \epsilon =1$ and  measurable functions $\ubar b:E \to (-\infty, -\ubar \epsilon]$, $\bar b:E \to [\bar \epsilon,\infty)$ such that for all $(x,a) \in D$
		\begin{align*}
			\rho\big( c(x,a,T(x,a,Z))\big)& \geq \ubar b(x), & \rho\big( -\ubar b(T(x,a,Z))\big) &\leq -\alpha \ubar b(x),\\
			\rho\big( c(x,a,T(x,a,Z))\big) &\leq \bar b(x), & \rho\big( \bar b(T(x,a,Z))\big) &\leq \alpha \bar b(x).
		\end{align*}
		\item[(ii)] We define $b:E \to [1,\infty), \ b(x)=\bar b(x) - \ubar b(x)$. For all $(\bar x, \bar a) \in D$ there exists an $\epsilon >0$ and measurable functions $\Theta_{1}^{\bar x, \bar a}, \Theta_{2}^{\bar x, \bar a}: \Z \to \R_+$ such that $\Theta_{1}^{\bar x, \bar a}(Z), \Theta_{2}^{\bar x, \bar a}(Z) \in L^p(\Omega,\A,\P)$ and
		\begin{align*}
			|c(x,a,T(x,a,z))| \leq \Theta_{1}^{\bar x, \bar a}(z), \qquad \qquad  b(T(x,a,z)) \leq \Theta_{2}^{\bar x, \bar a}(z)
		\end{align*}
		for all  $z \in \Z$ and $(x,a) \in B_\epsilon(\bar x, \bar a) \cap D$. Here, $B_\epsilon(\bar x, \bar a)$ is the closed ball around $(\bar x, \bar a)$ w.r.t.\ an arbitrary product metric on $E \times A$.
		\item[(iii)] The law-invariant risk measure $\rho: L^p(\Omega,\A,\P) \to \bar \R$ is proper, coherent and has the Fatou property.
		\item[(iv)] The discount factor $\beta$ satisfies $\alpha\beta <1$.
	\end{enumerate}
\end{assumption}

Due to  discounting, the global bounding functions corresponding to $\ubar b,\bar b, b$ are given by
\begin{align}\label{eq:infinite_global_bounding}
	\ubar \b = \frac{1}{1-\alpha\beta}\ubar b, \qquad \bar \b = \frac{1}{1-\alpha\beta}\bar b \qquad \text{and} \qquad \b = \frac{1}{1-\alpha\beta} b.
\end{align}
This can be seen as in the proof of Lemma  \ref{thm:stagewise_bound}. 

Since the model with infinite planning horizon will be derived as a limit of the one with finite horizon, the consideration can be restricted to Markov policies $\pi=(d_1,d_2,\dots) \in \Pi^M$ due to Theorem \ref{thm:finite}. When calculating limits, it is convenient to index the value functions with the distance to the time horizon rather than the point in time. This is also referred to as \emph{forward form} of the value iteration. 

\begin{definition}\label{def:operators2}
	For $v \in \BB_b$ and a Markov decision rule $d$ let
	\begin{align*}
	\T_{d} v(x) &=  \rho\Big( c\big(x,d(x),T(x,d(x),Z)\big) + \beta v\big(T(x,d(x),Z)\big) \Big), && x \in E,\\
	\T  v(x) &= \inf_{a \in D(x)}  \rho\Big( c\big(x,a,T(x,a,Z)\big) + \beta v\big(T(x,a,Z)\big) \Big),   && x \in E.
	\end{align*}
\end{definition}

The value of a policy $\pi=(d_0, d_1\dots ) \in \Pi^M$ up to a planning horizon $N \in \N$ now is
\begin{align}\label{eq:infinite_finite_policy_value}
	J_{N\pi}(x) & = \T_{d_0} \circ \dots \circ \T_{d_{N-1}} 0(x), \qquad x \in E.
\end{align}
In a non-stationary formulation the discounting is included in the one-stage cost functions and therefore calibrated w.r.t.\ the fixed reference time zero. If the value functions are considered at a later point in time, the non-stationary and stationary version differ by a discounting factor:
\[ J_{n}^{\text{non-stat}}(x)  = \beta^n J_{N-n}^{\text{stat}}(x), \qquad x \in E, \ n=0,\dots,N. \]

The reformulation \eqref{eq:infinite_finite_policy_value} makes it necessary to write the value iteration in terms of the \emph{shifted policy} $\vec \pi= (d_1,d_2, \dots)$ corresponding to $\pi=(d_0,d_1,\dots) \in \Pi^M$:
\begin{align*}
	J_{N\pi}(x) &= \T_{d_0} J_{N-1 \vec \pi}(x) = \rho\Big( c\big(x,d_0(x),T(x,d_0(x),Z)\big) + \beta J_{N-1 \vec \pi}\big(T(x,d_0(x),Z)\big) \Big), \qquad x \in E.
\end{align*}
The value function under planning horizon $N \in \N$ is given by
\begin{align*}
	J_N(x) = \inf_{\pi \in \Pi^M} J_{N\pi}(x), \qquad x \in E,
\end{align*}
By Theorem \ref{thm:finite}, the value function satisfies the Bellman equation
\begin{align}\label{eq:infinite_finite_Bellman}
	J_N(x) =\T J_{N-1}(x) = \T^N 0(x), \qquad x \in E.
\end{align}
When the planning horizon is infinite, we define the value of a policy $\pi \in \Pi^M$ as 
\begin{align} \label{eq:infinite_policy_value}
	J_{\infty \pi}(x)= \lim_{N \to \infty} J_{N\pi}(x), \qquad x \in E. 
\end{align}
Hence, the optimality criterion considered in this section is
\begin{align}\label{eq:opt_crit_infinite}
	J_{\infty}(x) = \inf_{\pi \in \Pi^M} J_{\infty \pi}(x), \qquad x \in E.
\end{align}
The next lemma shows that the infinite horizon policy value \eqref{eq:infinite_policy_value} and value function \eqref{eq:opt_crit_infinite} are well-defined.

\begin{lemma}\label{thm:infinite_convergence}
	Under Assumption \ref{ass:infinite}, the sequence $\{J_{N\pi}\}_{N \in \N}$ converges pointwise for every Markov policy $\pi \in \Pi^M$ and the limit $J_{\infty \pi}$ is bounded by $\ubar \b$ and $\bar \b$.
\end{lemma}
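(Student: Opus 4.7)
The plan is to establish two claims for every Markov policy $\pi = (d_0,d_1,\dots) \in \Pi^M$: a uniform interval bound $\ubar\b(x) \leq J_{N\pi}(x) \leq \bar\b(x)$ valid for all $N \in \N$, and a geometric Cauchy estimate $|J_{N\pi}(x) - J_{M\pi}(x)| \leq (\alpha\beta)^M \b(x)$ whenever $N > M$. The first immediately gives the bound on the limit; the second, since $\alpha\beta < 1$ by Assumption \ref{ass:infinite} (iv), forces pointwise convergence.

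For the interval bound I would argue by induction on $N$. The initialisation $J_{0\pi} \equiv 0$ lies in $[\ubar\b,\bar\b]$ because $\ubar b \leq -\ubar\epsilon \leq 0 \leq \bar\epsilon \leq \bar b$. In the inductive step I apply subadditivity and monotonicity of $\rho$ to
$$J_{N\pi}(x) = \rho\bigl(c(x,d_0(x),T(x,d_0(x),Z)) + \beta J_{N-1,\vec\pi}(T(x,d_0(x),Z))\bigr),$$
using the inductive hypothesis $J_{N-1,\vec\pi}(T) \leq \bar\b(T) = \frac{1}{1-\alpha\beta}\bar b(T)$ together with Assumption \ref{ass:infinite} (i) to collect
$$J_{N\pi}(x) \leq \bar b(x) + \frac{\alpha\beta}{1-\alpha\beta}\bar b(x) = \bar\b(x).$$
The symmetric lower bound follows identically, with Lemma \ref{thm:subadditivity_complement} playing the role of subadditivity. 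This mirrors the reasoning already used in the proof of Lemma \ref{thm:stagewise_bound}.

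For the Cauchy estimate, given $N > M$ I would decompose $J_{N\pi}(x) = \T_{d_0} \circ \cdots \circ \T_{d_{M-1}} v(x)$, where $v$ is the horizon-$(N{-}M)$ value of the shifted Markov policy $(d_M,d_{M+1},\dots)$, while $J_{M\pi}(x) = \T_{d_0} \circ \cdots \circ \T_{d_{M-1}} 0(x)$. By the first step, $|v| \leq \b$. The triangular inequality of Lemma \ref{thm:coherent_triangular}, together with positive homogeneity of $\rho$, yields the one-step contraction
$$|\T_d v'(x) - \T_d w'(x)| \leq \beta\,\rho\bigl(|v'(T(x,d(x),Z)) - w'(T(x,d(x),Z))|\bigr),$$
and a short computation (subadditivity of $\rho$ applied to $\bar b(T) + (-\ubar b(T))$, followed by Assumption \ref{ass:infinite} (i) for each summand) produces $\rho(\b(T(x,a,Z))) \leq \alpha\,\b(x)$. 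A straightforward induction on the depth of composition then yields $|J_{N\pi}(x) - J_{M\pi}(x)| \leq (\alpha\beta)^M \b(x)$, which vanishes as $M \to \infty$. Passing to the limit in the interval bound preserves $\ubar\b \leq J_{\infty\pi} \leq \bar\b$.

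The only real obstacle is a bookkeeping one: every random variable passed to $\rho$ must lie in $L^p(\Omega,\A,\P)$ to legitimise the triangular inequality, subadditivity and monotonicity steps. This is covered by Assumption \ref{ass:infinite} (ii), whose local $L^p$-envelopes dominate both $c(x,a,T(x,a,Z))$ and $\b(T(x,a,Z))$; together with the bound $|v|\leq\b$ established in the first step, this handles the iterated compositions appearing in the Cauchy argument.
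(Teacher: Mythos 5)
Your proof is correct, but it follows a different route from the paper. The paper does not use a Cauchy argument at all: it proves by induction the asymptotic-monotonicity inequality $J_{N\pi}(x) \geq J_{N-1\,\pi}(x) + (\alpha\beta)^{N-1}\ubar b(x)$ (using Lemma \ref{thm:subadditivity_complement} and positive homogeneity), telescopes it to $J_{N\pi} \geq J_{m\pi} + \delta_m$ with $\delta_m(x)=\sum_{k\geq m}(\alpha\beta)^k\ubar b(x) \to 0$, and then invokes the convergence result for ``weakly increasing'' bounded sequences (Lemma A.1.4 in \cite{BaeuerleRieder2011}); this structure, recorded as inequality \eqref{eq:weak_monotinicity}, is reused later to show that the limit value function $J$ exists and in the proof of Theorem \ref{thm:infinite}. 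You instead establish the uniform interval bound $\ubar\b \leq J_{N\pi} \leq \bar\b$ (exactly the computation behind \eqref{eq:infinite_global_bounding}) and then show each policy operator $\T_d$ is an $\alpha\beta$-contraction in the $\b$-weighted sense, via Lemma \ref{thm:coherent_triangular}, positive homogeneity, and $\rho(\b(T(x,a,Z)))\leq\alpha\b(x)$ — in effect the argument of Lemma \ref{thm:contraction} applied to $\T_{d}$ rather than $\T$ — and exploit the decomposition $J_{N\pi}=\T_{d_0}\circ\cdots\circ\T_{d_{M-1}}J_{N-M,\sigma^M\pi}$, $J_{M\pi}=\T_{d_0}\circ\cdots\circ\T_{d_{M-1}}0$ to get $|J_{N\pi}-J_{M\pi}|\leq(\alpha\beta)^M\b$. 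Your version buys a geometric rate of convergence (uniform in the $\|\cdot\|_\b$ norm) and is self-contained modulo the same ingredients (coherence, the dual inequality, Assumption \ref{ass:infinite}(ii) for $L^p$-membership of the arguments of $\rho$); the paper's version is slightly leaner per step and, more importantly, produces the weak-monotonicity inequality that the subsequent results rely on, which a pure Cauchy argument does not deliver. Both arguments degenerate harmlessly when $\alpha=0$, and both implicitly use that the intermediate functions are measurable policy values bounded by $\b$, which your first induction supplies for all shifted Markov policies.
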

\begin{proof}
	First, we show by induction that for all $N \in \N$
	\begin{align}\label{eq:infinite_convergence_1}
		J_{N\pi}(x) \geq J_{N-1 \pi}(x) + (\alpha\beta)^{N-1} \ubar b(x), \qquad x \in E.
	\end{align}
	For $N=1$ it holds by Assumption \ref{ass:infinite} (i) that $J_{1\pi}(x) \geq \ubar b(x) = J_{0\pi}(x) + (\alpha\beta)^{0} \ubar b(x)$. For $N \geq 2$ it follows
	\begin{align*}
		J_{N\pi}(x)&= \rho\Big( c\big(x,d_0(x),T(x,d_0(x),Z\big) + \beta J_{N-1 \vec \pi}\big(T(x,d_0(x),Z)\big) \Big)\\
		&\geq  \rho\Big( c\big(x,d_0(x),T(x,d_0(x),Z\big)  + \beta J_{N-2 \vec \pi}\big(T(x,d_0(x),Z)\big) + \beta (\alpha\beta)^{N-2} \ubar b \big( T(x,d_0(x),Z) \big) \Big)\\
		&\geq \rho\Big( c\big(x,d_0(x),T(x,d_0(x),Z\big)  + \beta J_{N-2 \vec \pi}\big(T(x,d_0(x),Z)\big) \Big)\\
		&\phantom{\geq} \ - \beta(\alpha\beta)^{N-2} \rho\Big( -\ubar b \big( T(x,d_0(x),Z) \big) \Big)\\
		&\geq \rho\Big( c\big(x,d_0(x),T(x,d_0(x),Z\big)  + \beta J_{N-2 \vec \pi}\big(T(x,d_0(x),Z)\big) \Big) + (\alpha\beta)^{N-1} \ubar b(x)\\
		&= J_{N-1\pi}(x) + (\alpha\beta)^{N-1} \ubar b(x).
	\end{align*} 
	The first inequality is by the induction hypothesis, the second one is by Lemma \ref{thm:subadditivity_complement} together with the positive homogeneity of $\rho$ and the third one is due to Assumption \ref{ass:infinite} (i). Thus, \eqref{eq:infinite_convergence_1} holds. Applying this inequality repeatedly for $N, N-1, \dots, m$ yields
	\begin{equation}\label{eq:weak_monotinicity}
		J_{N\pi}(x) \geq J_{m\pi}(x) + \sum_{k=m}^{N-1} (\alpha\beta)^k \ubar b(x) \geq  J_{m\pi}(x) +  \sum_{k=m}^{\infty} (\alpha\beta)^k \ubar b(x),
	\end{equation}
	where $\delta_m(x)=\sum_{k=m}^{\infty} (\alpha\beta)^k \ubar b(x)$ are non-positive functions with $\lim_{m \to \infty} \delta_m(x)=0$. Hence, the sequence of functions $\{J_{N\pi}\}_{N \in \N}$ is weakly increasing and therefore convergent to a limit function $J_{\infty \pi}$ by Lemma A.1.4 in \cite{BaeuerleRieder2011}. The global bounds \eqref{eq:infinite_global_bounding} also apply to the limit $J_{\infty \pi}$.
\end{proof}

\begin{lemma}\label{thm:contraction}
	Given Assumption \ref{ass:infinite}, the Bellman operator $\T$ is a contraction on $I=[\ubar \b, \bar \b ]$ with modulus $\alpha\beta \in (0,1)$.
\end{lemma}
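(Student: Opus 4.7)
The plan is to verify two things in sequence: first, that $\T$ maps the interval $I$ into itself so that the contraction statement is meaningful, and second, the Lipschitz estimate with modulus $\alpha\beta$ in the weighted norm $\|\cdot\|_\b$.

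For the self-map property, I would take $v \in I$ so that $\ubar \b \leq v \leq \bar \b$ pointwise, and then mimic the bookkeeping from Lemma \ref{thm:stagewise_bound}. Using monotonicity of $\rho$, positive homogeneity, subadditivity for the upper bound, and Lemma \ref{thm:subadditivity_complement} for the lower bound, together with Assumption \ref{ass:infinite} (i) applied to $\bar b$ and $\ubar b$, one obtains
\[
\T v(x) \leq \bar b(x) + \tfrac{\beta}{1-\alpha\beta}\,\alpha\,\bar b(x) = \tfrac{1}{1-\alpha\beta}\bar b(x) = \bar \b(x)
\]
and analogously $\T v(x) \geq \ubar \b(x)$, after taking the infimum over $a \in D(x)$. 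Lower semicontinuity and measurability of $\T v$ are inherited exactly as in the proof of Theorem \ref{thm:finite}, so $\T v \in I$.

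For the contraction estimate, fix $v,w \in I$ and $x \in E$. Using the elementary bound $|\inf_a f(a) - \inf_a g(a)| \leq \sup_a |f(a)-g(a)|$, then the coherent triangular inequality of Lemma \ref{thm:coherent_triangular}, and then positive homogeneity of $\rho$, I would write
\begin{align*}
|\T v(x) - \T w(x)| &\leq \sup_{a \in D(x)} \bigl| \rho\bigl(c+\beta v\circ T\bigr) - \rho\bigl(c+\beta w\circ T\bigr)\bigr| \\
&\leq \beta \sup_{a \in D(x)} \rho\Bigl( \bigl| v(T(x,a,Z)) - w(T(x,a,Z))\bigr| \Bigr),
\end{align*}
where the arguments of $c$ and $T$ are $(x,a,Z)$. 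Since $|v-w| \leq \|v-w\|_\b \cdot \b$ pointwise, monotonicity and positive homogeneity of $\rho$ give
\[
|\T v(x) - \T w(x)| \leq \beta\,\|v-w\|_\b \sup_{a \in D(x)} \rho\bigl( \b(T(x,a,Z))\bigr).
\]

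It remains to show $\rho(\b(T(x,a,Z))) \leq \alpha\,\b(x)$ for every $(x,a)\in D$; this is the one step that actually uses the structure of the local bounds. Writing $\b = \frac{1}{1-\alpha\beta}(\bar b - \ubar b)$, by positive homogeneity and subadditivity of $\rho$,
\[
\rho\bigl(\b(T(x,a,Z))\bigr) \leq \tfrac{1}{1-\alpha\beta}\Bigl( \rho\bigl(\bar b(T(x,a,Z))\bigr) + \rho\bigl(-\ubar b(T(x,a,Z))\bigr) \Bigr) \leq \tfrac{\alpha}{1-\alpha\beta}\bigl(\bar b(x) - \ubar b(x)\bigr) = \alpha\,\b(x),
\]
where the second inequality uses Assumption \ref{ass:infinite} (i). Combining the two displays, dividing by $\b(x)$, and taking the supremum over $x \in E$ gives $\|\T v - \T w\|_\b \leq \alpha\beta \,\|v-w\|_\b$. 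The main conceptual point, and the only nontrivial step, is the reduction of $\rho(\b\circ T)$ to a multiple of $\b$; everything else is abstract nonsense once Lemmas \ref{thm:coherent_triangular} and \ref{thm:subadditivity_complement} are in hand.
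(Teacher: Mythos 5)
Your proof is correct and follows essentially the same route as the paper: the self-map property via Lemma \ref{thm:subadditivity_complement}, subadditivity and Assumption \ref{ass:infinite} (i), and the Lipschitz bound via Lemma \ref{thm:coherent_triangular}, monotonicity, positive homogeneity and again Assumption \ref{ass:infinite} (i). The only cosmetic difference is that you estimate directly in the norm $\|\cdot\|_{\b}$ while the paper works with $\|\cdot\|_{b}$; since $\b=\tfrac{1}{1-\alpha\beta}b$ the two norms are proportional, so the modulus $\alpha\beta$ is unaffected.
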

\begin{proof}
	Let $v \in I$. It has been established in the proof of Theorem \ref{thm:finite} that $\T v$ is lower semicontinuous. Furthermore,
	\begin{align*}
		\T v(x) &\geq \T \ubar \b(x) =  \inf_{a \in D(x)} \rho\Big( c\big(x,a,T(x,a,Z)\big) + \frac{\beta}{1-\alpha\beta} \ubar b\big(T(x,a,Z)\big) \Big)\\
		&\geq \inf_{a \in D(x)} \rho\Big( c\big(x,a,T(x,a,Z)\big)\Big) - \frac{\beta}{1-\alpha\beta} \rho\Big(\!-\! \ubar b\big(T(x,a,Z)\big) \Big)\geq \ubar b(x) + \frac{\alpha\beta}{1-\alpha\beta}\ubar b(x) =\ubar \b(x).
	\end{align*}
	The second inequality is by Lemma \ref{thm:subadditivity_complement} together with the positive homogeneity of $\rho$ and the third one is due to Assumption \ref{ass:infinite} (i).  Regarding the upper bounding function one can argue similarly, using the subadditivity of $\rho$ instead of Lemma \ref{thm:subadditivity_complement}:
	\begin{align*}
		\T v(x) &\leq \T \bar \b(x) =\inf_{a \in D(x)} \rho\Big( c\big(x,a,T(x,a,Z)\big) + \frac{\beta}{1-\alpha\beta} \bar b\big(T(x,a,Z)\big) \Big)\\
		&\leq \inf_{a \in D(x)} \rho\Big( c\big(x,a,T(x,a,Z)\big)\Big) + \frac{\beta}{1-\alpha\beta} \rho\Big(\bar b\big(T(x,a,Z)\big) \Big) \leq \bar b(x) + \frac{\alpha\beta}{1-\alpha\beta}\bar b(x)=\bar \b(x).
	\end{align*}  
	Hence, the operator $\T$ is an endofunction on $I$ and it remains to verify the Lipschitz constant $\alpha\beta$. For $v_1,v_2 \in I$ it holds
	\begin{align*}
		\left\lvert \T v_1(x) - \T v_2(x) \right\rvert & \leq \sup_{a \in D(x)} \left\lvert Lv_1(x,a) - Lv_2(x,a)\right\rvert \\
		& \leq \beta \sup_{a \in D(x)} \rho\Big( \left\lvert  v_1\big(T(x,a,Z)\big) -  v_2\big(T(x,a,Z)\big) \right\rvert \Big)\\
		& \leq \beta \sup_{a \in D(x)} \rho\Big(  \|v_1-v_2\|_b   b\big(T(x,a,Z)\big) \Big)\\
		&=\beta \|v_1-v_2\|_b   \sup_{a \in D(x)} \rho\Big(   \bar b\big(T(x,a,Z)\big) - \ubar b\big(T(x,a,Z)\big)  \Big)\\
		&\leq\beta \|v_1-v_2\|_b   \sup_{a \in D(x)} \Big[\rho\Big(   \bar b\big(T(x,a,Z)\big)\Big) + \rho\Big( - \ubar b\big(T(x,a,Z)\big)  \Big) \Big]\\
		&\leq\alpha \beta \|v_1-v_2\|_b   \Big[\bar b(x) - \ubar b(x)\Big]\\
		& =\alpha\beta \|v_1-v_2\|_b  b(x).
	\end{align*}
	Dividing by $b(x)$ and taking the supremum over $x \in E$ on the left hand side completes the proof. Note that the second inequality is by Lemma \ref{thm:coherent_triangular}, the fourth one due to the subadditivity of $\rho$ and the last one by Assumption \ref{ass:infinite} (i).
\end{proof}

Under a finite planning horizon $N \in \N$ we have characterized the value function with the Bellman equation \eqref{eq:infinite_finite_Bellman}. We will show that this is compatible with the optimality criterion of the infinite horizon model \eqref{eq:opt_crit_infinite}. To this end, we define the \emph{limit value function}
\[ J(x)= \lim_{N \to \infty} J_N(x), \qquad x \in E. \]
Note that the limit exists since it follows from \eqref{eq:weak_monotinicity} that $J_N \ge J_m+\delta_m$ for all $N\ge m$ which implies the convergence.

\begin{theorem}\label{thm:infinite}
	Let Assumptions \ref{ass:continuity_compactness} and \ref{ass:infinite} be satisfied. Then it holds:
	\begin{enumerate}
		\item The limit value function $J$ is the unique fixed point of the Bellman operator $\T$ in $I=[\ubar \b, \bar \b]$.
		\item There exists a Markov decision rule $d^*$ such that $\T_{d^*} J = \T J$.
		\item Each stationary policy $\pi^*=(d^*,d^*,\dots)$ induced by a Markov decision rule $d^*$ as in b) is optimal for optimization problem \eqref{eq:opt_crit_infinite} and it holds $J_{\infty} = J$. 
	\end{enumerate}
\end{theorem}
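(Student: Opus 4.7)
The plan is to use the contraction property from Lemma \ref{thm:contraction} together with Banach's fixed point theorem as the backbone, then reapply the measurable selection argument from Theorem \ref{thm:finite} for part (b), and finally identify the value of the stationary policy $\pi^* = (d^*, d^*, \dots)$ with the common fixed point to obtain part (c).

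For part (a), I would first observe that $0 \in I$: indeed $\ubar \b \leq -\ubar \epsilon \leq 0 \leq \bar \epsilon \leq \bar \b$ and the zero function is lower semicontinuous, so it lies in $\BB$. By Theorem \ref{thm:finite} and \eqref{eq:infinite_finite_Bellman} the iterates $J_N = \T^N 0$ form a sequence in $I$. Since $(I, \|\cdot\|_\b)$ is complete and $\T$ is a contraction on it with modulus $\alpha\beta \in (0,1)$ by Lemma \ref{thm:contraction}, Banach's fixed point theorem supplies a unique fixed point $J^* \in I$ and $\T^N 0 \to J^*$ in $\|\cdot\|_\b$. Weighted-norm convergence implies pointwise convergence; combined with the pointwise convergence $J_N \to J$ coming from \eqref{eq:weak_monotinicity}, this forces $J = J^*$, so $J$ is the unique fixed point of $\T$ in $I$.

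For part (b), since $J \in I \subseteq \BB$ is lower semicontinuous, I would recycle the selection argument from the proof of Theorem \ref{thm:finite}: Assumption \ref{ass:continuity_compactness} together with Lemma \ref{thm:Lpbound} and the Fatou property of $\rho$ shows that $(x,a) \mapsto L v(x,a)$ is lower semicontinuous on $D$ for every $v \in \BB$, so Proposition 2.4.3 of \cite{BaeuerleRieder2011} yields a measurable $d^*$ with $\T_{d^*} J = \T J = J$. For part (c), one inequality $J_\infty \geq J$ is immediate from $J_{N\pi} \geq J_N$ upon letting $N \to \infty$. For the reverse, I note that the proof of Lemma \ref{thm:contraction} carries over verbatim with the single action $a = d^*(x)$ in place of the infimum over $D(x)$, so $\T_{d^*}$ is also a contraction on $I$ with modulus $\alpha\beta$. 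Since $\T_{d^*} J = J$, $J$ is its unique fixed point in $I$, and by \eqref{eq:infinite_finite_policy_value} the iterates $\T_{d^*}^N 0 = J_{N\pi^*}$ converge in $\|\cdot\|_\b$, hence pointwise, to $J$. Thus $J_{\infty \pi^*} = J$ and $J_\infty \leq J_{\infty \pi^*} = J$, which both identifies $J_\infty = J$ and certifies $\pi^*$ as optimal.

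The main obstacle I anticipate is justifying that $\T_{d^*}$ genuinely inherits the contraction bound of Lemma \ref{thm:contraction}: the proof of that lemma strings together Assumption \ref{ass:infinite} (i), the triangular inequality from Lemma \ref{thm:coherent_triangular}, the complementary estimate from Lemma \ref{thm:subadditivity_complement}, subadditivity and positive homogeneity of $\rho$, all applied under an infimum over $a \in D(x)$; one must check that each of those estimates continues to yield the same $\alpha\beta$ bound when the infimum is replaced by evaluation at $d^*(x)$ (it does, because every step is pointwise in the action). A smaller subtlety is reconciling the two modes of convergence for $J_N$ and $J_{N\pi^*}$, pointwise (from the monotonicity-type estimate \eqref{eq:weak_monotinicity}) versus $\|\cdot\|_\b$-norm (from Banach), in order to identify both limits with the common fixed point $J$.
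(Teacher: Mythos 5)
Your proposal is correct, and parts (a) and (b) match the paper: Banach's fixed point theorem via Lemma \ref{thm:contraction} for uniqueness (with your explicit reconciliation of norm and pointwise limits of $J_N=\T^N 0$, which the paper leaves implicit), and the measurable selection argument recycled from Theorem \ref{thm:finite} for the minimizer $d^*$. For part (c), however, you take a genuinely different route. The paper keeps only the operator $\T$ and proves the reverse inequality $J_{\infty\pi^*}\leq J$ by induction on the estimate $J \geq J_{N\pi^*} + \frac{(\alpha\beta)^N}{1-\alpha\beta}\ubar b$, using $\T_{d^*}J=J$, Lemma \ref{thm:subadditivity_complement} with positive homogeneity, and Assumption \ref{ass:infinite} (i); this one-sided argument never needs $\T_{d^*}$ to be a well-defined contraction on a complete function space. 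You instead argue that $\T_{d^*}$ inherits the contraction modulus $\alpha\beta$ and that $J_{N\pi^*}=\T_{d^*}^N 0\to J$ in $\|\cdot\|_\b$, which is stronger (it identifies $J_{\infty\pi^*}=J$ exactly, with a geometric rate) and arguably cleaner. The one point you should patch is your phrase ``unique fixed point in $I$'' for $\T_{d^*}$: since $d^*$ is only measurable, $\T_{d^*}v(x)=Lv(x,d^*(x))$ need not be lower semicontinuous, so $\T_{d^*}$ is not a self-map of $I\subseteq\BB$ as you define it. This is easily repaired: the Lipschitz estimate in Lemma \ref{thm:contraction} is pointwise in the action and does not use semicontinuity, so either run Banach on the interval $\{v\in\BB_\b:\ubar\b\leq v\leq\bar\b\}$ of merely measurable functions (measurability of the iterates $J_{N\pi^*}$ being the standing assumption of Remark \ref{rem:measurability}), or simply iterate the pointwise bound to get $\|J_{N\pi^*}-J\|_\b=\|\T_{d^*}^N 0-\T_{d^*}^N J\|_\b\leq(\alpha\beta)^N\|J\|_\b\to 0$. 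With that adjustment your argument is complete; the trade-off is that the paper's induction is more frugal in its hypotheses on $\T_{d^*}$, while yours yields quantitative convergence of the stationary policy values.
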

\begin{proof}
	\begin{enumerate}
		\item The fact that $J$ is the unique fixed point of the operator $\T$ in $I$ follows directly from Banach's Fixed Point Theorem using Lemma \ref{thm:contraction}.
		\item The existence of a minimizing Markov decision rule follows from the respective result in the finite horizon case, cf.\ Theorem \ref{thm:finite}. 
		\item Let $d^*$ be a Markov decision rule as in part b) and $\pi^*=(d^*,d^*,\dots)$. Then it holds 
		$$J(x)  \leq J_{\infty}(x) \leq J_{\infty \pi^*}(x), \qquad x \in E.$$
		The second inequality holds by definition. Regarding the first one note that for any $\pi \in \Pi^M$ we have $J_N(x) \leq J_{N\pi}(x)$ for all $N \in \N_0$. Letting $N \to \infty$ yields $J(x) \leq J_{\infty \pi}(x)$. Since $\pi \in \Pi^M$ was arbitrary we get $ J(x) \leq \inf_{\pi \in \Pi^M} J_{\infty \pi}(x) = J_{\infty}(x)$. It remains to show
		\begin{align}\label{eq:infinite_proof_1}
			J_{\infty \pi^*}(x) \leq J(x), \qquad x \in E.
		\end{align} 
		To that end, we will prove by induction that for all $N \in \N_0$ and $x \in E$
		\begin{align}\label{eq:infinite_proof_2}
			J(x) \geq J_{N\pi^*}(x) + \frac{(\alpha\beta)^N}{1-\alpha\beta} \ubar b(x).
		\end{align}
		Letting $N \to \infty$ in \eqref{eq:infinite_proof_2} yields \eqref{eq:infinite_proof_1} and concludes the proof. For $N=0$ equation \eqref{eq:infinite_proof_2} reduces to $J(x) \geq \frac{1}{1-\alpha\beta} \ubar b(x)= \ubar \b(x)$, which holds by part a). For $N \geq 1$ the induction hypothesis yields
		\begin{align*}
			J(x) &= \T_{d^*} J(x) \geq \T_{d^*} \left(J_{N-1\pi^*} + \frac{(\alpha\beta)^{N-1}}{1-\alpha\beta} \ubar b\right)(x) \\
			&\geq \rho\Big( c\big(x,d^*(x),T(x,d^*(x),Z)\big) + \beta  J_{N-1\pi^*}\big(T(x,d^*(x),Z)\big)\Big)\\
			&\phantom{\geq}\ - \beta\frac{(\alpha\beta)^{N-1}}{1-\alpha\beta} \rho\Big(- \ubar b \big(T(x,d^*(x),Z)\big) \Big)\\
			&\geq  J_{N\pi^*}(x) + \frac{(\alpha\beta)^{N}}{1-\alpha\beta} \ubar b(x).
		\end{align*}
		The second inequality is by Lemma \ref{thm:subadditivity_complement} together with the positive homogeneity of $\rho$ and the last one is by Assumption \ref{ass:infinite} (i). \qedhere
	\end{enumerate}
\end{proof}

Let us now consider the special case that the one-stage cost is bounded, i.e.
\begin{enumerate}
	\item[(B)] there exist $\ubar b \in \R_-$ and $\bar b \in \R_+$ such that $b=\bar b - \ubar b >0$ and $\ubar b \leq c\big(x,a,T(x,a,Z)\big) \leq \bar b$ $\P$-f.s.\ for all $(x,a) \in D$.
\end{enumerate}
Then, Assumption \ref{ass:infinite} (i) is satisfied with $\alpha =1$ and part (ii) is obvious. Part (iv) of the assumption reduces to $\beta<1$.

\begin{corollary}\label{thm:infinite_bounded_case}
	Given (B), Lemmata \ref{thm:infinite_convergence}, \ref{thm:contraction} and in case Assumption \ref{ass:continuity_compactness} is satisfied,  Theorem \ref{thm:infinite} hold for any normalized monetary risk measure with the Fatou property.
\end{corollary}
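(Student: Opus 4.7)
The plan is to revisit the three cited results and show that every appeal to coherence (via Lemma \ref{thm:subadditivity_complement}, Lemma \ref{thm:coherent_triangular}, subadditivity or positive homogeneity) can be replaced by translation invariance once (B) is assumed. The key observation is that under (B) the bounding functions $\ubar b, \bar b, b$ are constants, so the random variables $\ubar b(T(x,a,Z)), \bar b(T(x,a,Z)), b(T(x,a,Z))$ are deterministic real numbers. Assumption \ref{ass:infinite}(i) holds with $\alpha=1$, and Assumption \ref{ass:infinite}(ii) is trivial with $\Theta_1 \equiv \max\{|\ubar b|, \bar b\}$ and $\Theta_2 \equiv b$, so Lemma \ref{thm:Lpbound} remains available; Assumption \ref{ass:infinite}(iv) reduces to $\beta < 1$.

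For Lemma \ref{thm:infinite_convergence}, I would redo the induction step for $J_{N\pi}(x) \geq J_{N-1\pi}(x) + \beta^{N-1}\ubar b$: after applying the induction hypothesis and monotonicity of $\rho$, the extraction of the additive constant $\beta\cdot\beta^{N-2}\ubar b = \beta^{N-1}\ubar b$ from inside $\rho$ proceeds by translation invariance as an equality, so Lemma \ref{thm:subadditivity_complement} and positive homogeneity are unnecessary. The remainder of the proof only uses Lemma A.1.4 in \cite{BaeuerleRieder2011}.

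For Lemma \ref{thm:contraction}, the range inclusion $\T(I) \subseteq I$ follows because, for $v \in I = [\ubar b/(1-\beta), \bar b/(1-\beta)]$, monotonicity and translation invariance of $\rho$ together with $\ubar b \leq c(\cdot) \leq \bar b$ give $\ubar b/(1-\beta) \leq \T v(x) \leq \bar b/(1-\beta)$. For the Lipschitz bound, note that $|v_1 - v_2|\leq \|v_1-v_2\|_b\, b$ pointwise with $b$ constant, hence $v_2 - \|v_1-v_2\|_b b \leq v_1 \leq v_2 + \|v_1-v_2\|_b b$; applying monotonicity and then translation invariance (in both directions) to the argument of $\rho$ yields $|\T v_1(x) - \T v_2(x)| \leq \beta\|v_1-v_2\|_b b$, i.e.\ contraction with modulus $\beta \in (0,1)$. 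Here Lemma \ref{thm:coherent_triangular} and subadditivity are bypassed precisely because the function $b$ bounding $v_1-v_2$ is a constant.

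For Theorem \ref{thm:infinite}, part (a) is immediate from Banach's fixed point theorem using the contraction just reproved. Part (b) relies on Theorem \ref{thm:finite}, whose proof only needs Assumption \ref{ass:continuity_compactness}, the Fatou property of $\rho$ and Lemma \ref{thm:Lpbound}; no coherence was invoked there. For part (c), the induction establishing $J(x) \geq J_{N\pi^*}(x) + \frac{\beta^N}{1-\beta}\ubar b$ uses that $\ubar b$ is constant: pulling $\beta\cdot\frac{\beta^{N-1}}{1-\beta}\ubar b$ out of $\rho$ is again an equality by translation invariance, replacing the combined use of Lemma \ref{thm:subadditivity_complement} and positive homogeneity. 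Letting $N \to \infty$ yields $J_{\infty\pi^*} \leq J$, and combined with $J \leq J_{\infty} \leq J_{\infty\pi^*}$ (which requires only monotonicity of $\rho$) concludes $J_\infty = J$. The main bookkeeping obstacle is simply to confirm at each step that the quantity being extracted from $\rho$ is deterministic under (B); once this is verified, translation invariance (and normalization, which is inherent in the monetary setting after a constant shift) fully substitutes for coherence.
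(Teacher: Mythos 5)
Your proposal is correct and follows essentially the same route as the paper, whose proof is just the one-line observation that every step previously justified by Lemma \ref{thm:subadditivity_complement}, subadditivity, positive homogeneity or Assumption \ref{ass:infinite}(i) now holds by translation invariance and normalization, since under (B) the bounding functions are constants. You have simply spelled out in detail the verification the paper leaves implicit (including the modulus $\alpha\beta=\beta$ with $\alpha=1$), and the details check out.
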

\begin{proof}
	The steps in the proofs that where justified by Lemma \ref{thm:subadditivity_complement}, subadditivity, positive homogeneity or Assumption \ref{ass:infinite} (i) now hold due to translation invariance and normalization. Nothing else has to be changed.
\end{proof}

\section{Connection to Distributionally Robust MDP}\label{sec:robust}

We consider the stationary version of the decision model with no terminal cost under both finite and infinite horizon in this section. If the planning horizon is finite, stationarity is only assumed for convenience and everything can be transferred to a non-stationary setting purely by notational changes. Let the risk measure $\rho$ be proper and coherent with the Fatou property. By inserting the dual representation of Proposition \ref{thm:coherent_risk_measure_dual} in the Bellman equation, we get
\begin{align*}
J_N(x) &= 0,\\
J_n(x) &= \inf_{a \in D(x)} \sup_{\Q \in \Qc} \E^\Q\Big[ c\big(x,a,T(x,a,Z)\big) + \beta  J_{n+1}\big(T(x,a,Z)\big) \Big], \qquad x \in E,
\end{align*}
i.e.\ the Bellman equation of a distributionally robust MDP as considered in \cite{BauerleGlauner2020}. Under some minor technical assumptions we have indeed a special case of the distributionally robust MDP and thus obtain a global interpretation of the recursively defined risk-sensitive optimality criterion. 

Due to the independence of the disturbances we can w.l.o.g.\ assume that the underlying probability space has a product structure $(\Omega,\A,\P) = \bigotimes_{n=1}^\infty (\Omega_1,\A_1,\P_1)$ with $Z_n(\omega)=Z_n(\omega_n)$ only depending on component $\omega_n$ of $\omega=(\omega_1,\omega_2,\dots) \in \Omega$. For a policy $\pi=(d_0,d_1,\dots) \in \Pi^M$ of the controller and $\gamma=(\gamma_0,\gamma_1,\dots)$, where $\gamma_n:D \to \Qc$ is measurable, we define the transition kernel
\begin{align*}
	Q_n^{\pi\gamma}(B|x,a)= \int \1_B\big(T(x,d_n(x),Z(\omega))\big) \gamma_n(\dif \omega|x,d_n(x)), \qquad B \in \B(E), \ x \in E,
\end{align*}
and the law of motion $\Q^{\pi\gamma}_x = \delta_x \otimes Q_0^{\pi\gamma} \otimes Q_1^{\pi\gamma} \otimes \dots$ The set of all possible laws of motion under policy $\pi \in \Pi^M$ is denoted by $\Qf_\pi = \{ \Q_x^{\pi \gamma}: \gamma \in \Gamma \}$ with $\Gamma$ being the set of all possible $\gamma$.

\begin{theorem} \phantomsection \label{thm:abstract_connection_robust_risk}
	Let Assumption \ref{ass:infinite} be fulfilled with the following tightening in part (i):
		\begin{align*}
		\rho\big( c^-(x,a,T(x,a,Z))\big)& \leq -\ubar b(x), &  \rho\big( c^+(x,a,T(x,a,Z))\big) &\leq \bar b(x), & (x,a) \in D.
		\end{align*}
		Furthermore, let the underlying probability space have a product structure as above and let the probability measure $\P_1$ on $(\Omega_1,\A_1)$ be separable. Then, for $N \in \N \cup \{\infty\}$ it holds
		\begin{align}\label{eq:robust_1}
			J_N(x) = \inf_{\pi \in \Pi^M} \sup_{\Q \in \Qf_\pi} \E^{\Q}\left[ \sum_{k=0}^{N-1} \beta^{k} c(X_k,d_k(X_k),X_{k+1}) \right]
		\end{align}
\end{theorem}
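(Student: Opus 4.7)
The plan is to recognise the problem as a special case of the distributionally robust MDP studied in \cite{BauerleGlauner2020} and to deduce the representation from there. First, inserting the dual representation of Proposition \ref{thm:coherent_risk_measure_dual} into the Bellman equation from Theorems \ref{thm:finite} and \ref{thm:infinite} gives
\begin{align*}
J_n(x) = \inf_{a \in D(x)} \sup_{\Q \in \Qc} \E^\Q\Big[ c(x,a,T(x,a,Z)) + \beta J_{n+1}(T(x,a,Z)) \Big],
\end{align*}
which is exactly the dynamic programming equation of a distributionally robust MDP whose per-stage ambiguity set is $\Qc$. The remaining work is to verify the hypotheses of \cite{BauerleGlauner2020} and then to perform the measurable selection that transforms these iterated pointwise suprema into a single supremum over $\Qf_\pi$.

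For the finite horizon case I would argue by backward induction on $n$ down to $0$. Assumption \ref{ass:continuity_compactness} supplies the continuity and compactness of $D(x)$. The tightened one-sided bounds $\rho(c^\pm(x,a,T(x,a,Z)))\le \bar b(x),\,-\ubar b(x)$ imply that $\sup_{\Q\in\Qc}\E^\Q[|c(x,a,T(x,a,Z))|]$ is controlled by $b(x)$, which together with the contraction property from Lemma \ref{thm:contraction} provides a dominating tail for the series on the right-hand side of \eqref{eq:robust_1}. The product structure of $(\Omega,\A,\P)$ together with the separability of $\P_1$ makes $L^q(\Omega_1,\A_1,\P_1)$ separable, so that the $\sigma(L^q,L^p)$-compact set $\Qc$ from Proposition \ref{thm:coherent_risk_measure_dual} is metrisable; this is the regularity needed to extract a measurable selection $\gamma_n : D \to \Qc$ that realises, up to $\varepsilon$, the stagewise supremum uniformly in $(x,a)$. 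Plugging $\gamma = (\gamma_n)$ into $\Q^{\pi\gamma}_x$ and iterating then identifies the stagewise dual optimisation with the global optimisation over $\Qf_\pi$, essentially by the measurable-selection argument of Theorem 3.6 in \cite{BauerleGlauner2020}.

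For $N=\infty$ I would pass to the limit in both sides of \eqref{eq:robust_1}. The left-hand side converges to $J$ by Theorem \ref{thm:infinite}. For the right-hand side, the bound $|c(X_k,d_k(X_k),X_{k+1})| \le b(X_k)$ holding $\Q$-a.s.\ for each $\Q \in \Qf_\pi$, combined with the geometric factor $\beta^k$ and the condition $\alpha\beta<1$, yields a $\Q$-integrable envelope uniform in $\Q$; monotone/dominated convergence then interchanges the limit with $\E^\Q$ and, subsequently, with the outer $\sup$ and $\inf$, delivering \eqref{eq:robust_1} in the infinite-horizon case.

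The main obstacle is the measurable-selection step that globalises the stagewise dual suprema over $\Qc$ into a single policy-level adversary $\gamma \in \Gamma$; the separability of $\P_1$ is the precise technical hypothesis that makes this possible. A secondary subtlety is the uniform-in-$\Q$ tail estimate when passing $N \to \infty$, but the one-sided tightening of Assumption \ref{ass:infinite} (i) imposed on $c^\pm$ is exactly calibrated to provide it.
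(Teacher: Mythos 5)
Your overall strategy is the same as the paper's: insert the dual representation of Proposition \ref{thm:coherent_risk_measure_dual} into the Bellman equation and recognise the problem as a distributionally robust MDP, then invoke the results of \cite{BauerleGlauner2020} (the paper verifies Assumption 3.1 there and cites Theorem 3.10 for $N \in \N$ and Theorem 4.18 in \cite{Glauner2020} for $N=\infty$). For the finite horizon your sketch is essentially this reduction and is fine in outline, although your verification of the framework's hypotheses is incomplete: besides the bounding/drift inequalities, which the paper derives explicitly from the dual representation (e.g.\ $\E^\Q[c^+(x,a,T(x,a,Z))] \leq \sup_{\Q \in \Qc}\E^\Q[c^+(x,a,T(x,a,Z))] = \rho(c^+(x,a,T(x,a,Z))) \leq \bar b(x)$ for every $\Q \in \Qc$, and similarly for $c^-$, $\ubar b$, $\bar b$), one also needs the ambiguity set $\Qc$ to be norm bounded in $L^q$, which the paper gets from its $\sigma(L^q,L^p)$-compactness via Banach--Alaoglu; you do not address this. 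A small related slip: the separability of $\P_1$ yields separability of $L^p(\Omega_1,\A_1,\P_1)$ (the predual), which is what makes bounded weak* sets metrisable; $L^q$ itself need not be separable (e.g.\ $q=\infty$).

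The genuine gap is in your infinite-horizon argument. You claim a pointwise envelope $|c(X_k,d_k(X_k),X_{k+1})| \leq b(X_k)$ $\Q$-a.s.\ for every $\Q \in \Qf_\pi$ and then apply dominated convergence. The hypotheses do not give any pointwise bound on the cost: the tightened Assumption \ref{ass:infinite} (i) only bounds $\rho(c^\pm)$, hence only the expectations $\E^\Q[c^\pm]$ for $\Q \in \Qc$, stagewise and conditionally. The correct tail control comes from iterating the drift inequalities $\E^\Q[\bar b(T(x,a,Z))] \leq \alpha \bar b(x)$ and $\E^\Q[\ubar b(T(x,a,Z))] \geq \alpha \ubar b(x)$ along the law of motion $\Q \in \Qf_\pi$, which yields bounds of order $(\alpha\beta)^k$ on $\beta^k\E^\Q[|c(X_k,d_k(X_k),X_{k+1})|]$, uniformly in $\Q$ and $\pi$; only then can you interchange the limit with $\E^\Q$, $\sup_\Q$ and $\inf_\pi$. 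This is precisely the content of the cited infinite-horizon result (Theorem 4.18 in \cite{Glauner2020}), which the paper uses instead of redoing the limit passage; as written, your envelope step would fail.
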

\begin{proof}
		We need to verify Assumption 3.1 in \cite{BauerleGlauner2020}. Then, the assertion follows from Theorem 3.10 therein for the finite horizon case and from Theorem 4.18 in \cite{Glauner2020} for the infinite horizon case. Part (i) holds since we have for all $\Q \in \Qc$ and $(x,a) \in D$
		\begin{align*}
		\E^\Q\left[-c^-(x,a,T(x,a,Z))\right] &\geq \inf_{\Q \in \Qc} \E^\Q\left[-c^-(x,a,T(x,a,Z))\right]= - \sup_{\Q \in \Qc} \E^\Q\left[c^-(x,a,T(x,a,Z))\right]\\
		&= -\rho\big( c^-(x,a,T(x,a,Z)) \big)\geq \ubar b(x),\\
		\E^\Q\left[\ubar b(T(x,a,Z)) \right] &\geq \inf_{\Q \in \Qc} \E^\Q\left[\ubar b(T(x,a,Z))\right]= -\sup_{\Q \in \Qc} \E^\Q\left[-\ubar b(T(x,a,Z))\right]\\
		&= -\rho\big( -\ubar b(T(x,a,Z)) \big)\geq \alpha \ubar b(x),\\
		\E^\Q\left[c^+(x,a,T(x,a,Z))\right] &\leq \sup_{\Q \in \Qc} \E^\Q\left[c^+(x,a,T(x,a,Z))\right]= \rho\big( c^+(x,a,T(x,a,Z)) \big) \leq \ubar b(x),\\
		\E^\Q\left[\bar b(T(x,a,Z)) \right] &\leq \sup_{\Q \in \Qc} \E^\Q\left[\bar b(T(x,a,Z))\right]= \rho\big( \bar b(T(x,a,Z)) \big)\leq \alpha \bar b(x).
		\end{align*}
		Part (ii) equals Assumption \ref{ass:infinite} (ii). Finally, part (iii) holds since $\rho(X)= \max_{\Q \in \Qc} \E^\Q[X]$ by Proposition \ref{thm:coherent_risk_measure_dual} where $\Qc \subseteq \M_1^q(\Omega,\A,\P)$ is weak* compact and therefore norm bounded by the Banach-Alaoglu Theorem 6.21 in \cite{AliprantisBorder2006}. 
\end{proof}

It is readily checked that for a fixed policy $\pi \in \Pi^M$ of the controller $\tilde \rho(X)= \sup_{\Q \in \Qf_\pi} \E^\Q[X]$, $X \in L^p(\Omega,\A,\P)$, defines a coherent risk measure. If the stage-wise applied risk measure $\rho$ is spectral and the model data has certain monotonicity properties, one can choose $\Qf$ to be independent of $\pi$, cf.\ Lemma 6.8 and subsequent remarks in \cite{BauerleGlauner2020}. In this case, the recursive minimization of spectral risk measures is equivalent to the minimization of a non-standard coherent risk measure applied to the total cost.  

Besides, one can reformulate \eqref{eq:robust_1} to
\begin{align*}
J_N(x) = \inf_{\pi \in \Pi^M} \sup_{\gamma \in \Gamma} \E^{\pi \gamma}\left[ \sum_{k=0}^{N-1} \beta^{k} c(X_k,d_k(X_k),X_{k+1})\right]
\end{align*}
with the interpretation of a Stackelberg game of the controller against a theoretical opponent (nature) selecting the most adverse disturbance distribution in each scenario. Here, $\gamma=(\gamma_0,\gamma_1,\dots)$ with $\gamma_n:D \to \Qc$ is a Markov policy of nature. This game is extensively studied in \cite{BauerleGlauner2020}. From this perspective we get another global interpretation of the recursively defined objective function as robust minimization of the expected total cost.

\section{Relaxed Assumptions for Monotone Models}\label{sec:monotone}

The  model has been introduced in Section \ref{sec:decision_model} with a general Borel space as state space.  However, in many applications the state space is simply $\R$. In this case, the assumption on the transition function can be relaxed to semicontinuity  when the transition and one-stage cost function have some form of monotonicity. For notational convenience, we consider the stationary model with no terminal cost under both finite and infinite horizon in this section. We replace Assumption \ref{ass:continuity_compactness} by 

\begin{assumption}\label{ass:monotone}
	\begin{enumerate}
		\item[(i)] The state space is the real line $E=\R$.
		\item[(ii)]  The sets $D(x)$ are compact and  $\R \ni x \mapsto D(x)$ is upper semicontinuous and decreasing, i.e.\ $D(x) \supseteq D(y)$ for $x \leq y$.
		\item[(iii)] The transition functions $T$ is lower semicontinuous in $(x,a)$ and increasing in $x$.
		\item[(iv)] The one-stage cost function $c$ is lower semicontinuous in $(x,a,x')$ and increasing in $(x,x')$. 
	\end{enumerate}
\end{assumption}

How do the modified continuity assumptions affect the validity of the results in Sections \ref{sec:finite} and \ref{sec:infinite}? Lemmata \ref{thm:stagewise_bound}, \ref{thm:Lpbound}, \ref{thm:infinite_convergence} and \ref{thm:contraction} were proven without using the continuity of $T$. Thus, only Theorems \ref{thm:finite} and \ref{thm:infinite} need to be looked at.

\begin{proposition}\label{thm:monotone}
	Let the new continuity and monotonicity Assumptions \ref{ass:monotone} be satisfied. Then,
	\begin{enumerate}
		\item under Assumption \ref{ass:finite}, the assertion of Theorem \ref{thm:finite} remains true.
		\item under Assumption \ref{ass:infinite}, the assertion of Theorem \ref{thm:infinite} remains true.
	\end{enumerate}
	 \noindent In both cases, the value functions are increasing and the set of potential value functions can be replaced by $\BB = \{ v \in \BB_\b : \ v \text{ lower semicontinuous and increasing}  \}$.
\end{proposition}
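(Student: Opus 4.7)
My plan is to identify the single point in the proofs of Theorems \ref{thm:finite} and \ref{thm:infinite} where continuity of $T$ was invoked and replace that step by a monotonicity argument on a suitably restricted space of functions. All other ingredients (the $L^p$-domination in Lemma \ref{thm:Lpbound}, the bounding estimates and contraction modulus computation in Lemmata \ref{thm:stagewise_bound}, \ref{thm:infinite_convergence} and \ref{thm:contraction}, and the backward-induction and fixed-point arguments of Theorems \ref{thm:finite} and \ref{thm:infinite}) did not use continuity of $T$ and therefore transfer verbatim. The only delicate step is the lower semicontinuity of $(x,a) \mapsto L_n v(x,a)$: in Theorem \ref{thm:finite} this relied on $(x,a) \mapsto T(x,a,z)$ being continuous so that $c + v \circ T$ would inherit lower semicontinuity from its ingredients.

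First I would replace $\BB$ by $\{v \in \BB_\b : v \text{ lower semicontinuous and increasing}\}$. Since weighted-uniform convergence implies pointwise convergence and the class of increasing functions is closed under pointwise limits, this restricted $\BB$ and the intervals $I \cap \BB$ remain closed in $\BB_\b$ and hence are complete metric spaces on which to run the value iteration. The monotonicity half of the invariance $\T_n(\BB) \subseteq \BB$ is straightforward: for $x_1 \leq x_2$ and $a \in A$, Assumption \ref{ass:monotone} gives $T(x_1,a,z) \leq T(x_2,a,z)$, hence $c(x_1,a,T(x_1,a,z)) + v(T(x_1,a,z)) \leq c(x_2,a,T(x_2,a,z)) + v(T(x_2,a,z))$ by monotonicity of $c$ and $v$, and monotonicity of $\rho$ yields $L_n v(x_1,a) \leq L_n v(x_2,a)$; combining this with $D(x_1) \supseteq D(x_2)$ gives $\T_n v(x_1) \leq \T_n v(x_2)$.

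The substantial step is the lower semicontinuity of $L_n v$. Fix $(x_k,a_k) \to (x^*,a^*)$ in $D$, $z \in \Z$, and $\epsilon > 0$. Lower semicontinuity of $T$ in $(x,a)$ gives $T(x_k,a_k,z) \geq T(x^*,a^*,z) - \epsilon$ for $k$ large, and monotonicity of $c$ in $x'$ together with monotonicity of $v$ implies
\[ c(x_k,a_k,T(x_k,a_k,z)) + v(T(x_k,a_k,z)) \geq c\bigl(x_k,a_k,T(x^*,a^*,z) - \epsilon\bigr) + v\bigl(T(x^*,a^*,z) - \epsilon\bigr). \]
Taking $\liminf_{k \to \infty}$ and using lower semicontinuity of $c$ in $(x,a,x')$ bounds the right-hand side below by $c(x^*,a^*,T(x^*,a^*,z) - \epsilon) + v(T(x^*,a^*,z) - \epsilon)$. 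Letting $\epsilon \downarrow 0$ and invoking the fact that an increasing lower semicontinuous function on $\R$ is left-continuous recovers $c(x^*,a^*,T(x^*,a^*,z)) + v(T(x^*,a^*,z))$. Combined with the $L^p$-domination of Lemma \ref{thm:Lpbound} and the Fatou property of $\rho$, this yields lower semicontinuity of $L_n v$ in $(x,a)$, after which Proposition 2.4.3 of \cite{BaeuerleRieder2011} supplies a measurable minimizer and the lower semicontinuity of $\T_n v$, so $\T_n v \in \BB$.

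With invariance of $\BB$ under $\T_n$ secured, the proofs of Theorems \ref{thm:finite} and \ref{thm:infinite} apply on the restricted space. The finite-horizon backward induction starts with $c_N \equiv 0 \in \BB$ and stays in $I \cap \BB$. In the infinite-horizon case, Banach's Fixed Point Theorem on the complete metric space $I \cap \BB$ (with $\T$ still a contraction, as Lemma \ref{thm:contraction} is unaffected) produces a unique fixed point that is automatically lower semicontinuous and increasing; the remaining arguments of Theorem \ref{thm:infinite} transfer without change. The hard part will be the lower-semicontinuity argument above, and in particular the interplay between the $\epsilon$-shift on $T$ and the left-continuity of monotone lower semicontinuous functions. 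This is precisely where monotonicity of $c$ in $x'$ and of $v$ are both essential, and it is the reason one cannot drop the \emph{increasing} restriction on the class of candidate value functions.
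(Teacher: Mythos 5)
Your proposal is correct and follows essentially the same route as the paper: the paper's proof likewise observes that continuity of $T$ entered only through the $\omega$-wise lower semicontinuity of $(x,a)\mapsto c(x,a,T(x,a,z))+\beta v(T(x,a,z))$, restricts $\BB$ to lower semicontinuous increasing functions, and then lets the Fatou/$L^p$-domination machinery of Theorem \ref{thm:finite} (and, indirectly, Theorem \ref{thm:infinite}) run unchanged. Your $\epsilon$-shift/left-continuity argument simply spells out the composition step that the paper asserts in one line, and your direct check that $\T v$ is increasing replaces the paper's citation of Theorem 2.4.14 in \cite{BaeuerleRieder2011}.
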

\begin{proof}
	In the proof of Theorem \ref{thm:finite}, the continuity of $T$ is only used to show that $D \ni (x,a) \mapsto Lv(x,a)$ is lower semicontinuous for every $v \in \BB$. Due to the monotonicity assumptions, $$D \ni (x,a) \mapsto c\big(x,a,T(x,a,Z(\omega))\big)+ \beta v\big(T(x,a,Z(\omega))\big)$$ is lower semicontinuous for every $\omega \in \Omega$. Now, the lower semicontinuity of $D \ni (x,a) \mapsto Lv(x,a)$ and the existence of a minimizing decision rule follow as in the proof of Theorem \ref{thm:finite}. The fact that $\T v$ is increasing for every $v \in \BB$ follows as in Theorem 2.4.14 in \cite{BaeuerleRieder2011}. Theorem \ref{thm:infinite} uses the continuity of $T$ only indirectly through Theorem \ref{thm:finite}.
\end{proof}

With the real line as state space, a simple separation condition is sufficient for Assumptions \ref{ass:finite} (ii) or \ref{ass:infinite} (ii).

\begin{lemma}\label{thm:separation}
	Let there be upper semicontinuous functions $\vartheta_{1},\vartheta_{2}:D \to \R_+$ and measurable functions $\Theta_{1},\Theta_{2}: \Z \to \R_+$ which fulfill $\Theta_{1}(Z),\Theta_{2}(Z) \in L^p(\Omega,\A,\P)$ and
	\begin{align*}
	|c(x,a,T(x,a,z))| \leq \vartheta_{1}(x,a) + \Theta_{1}(z),  \qquad  b(T(x,a,z)) \leq \vartheta_{2}(x,a) + \Theta_{2}(z)
	\end{align*}
	for every $(x,a,z) \in D \times \Z$. Then Assumptions \ref{ass:finite} (ii) and \ref{ass:infinite} (ii) are satisfied.
\end{lemma}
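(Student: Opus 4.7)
The plan is to exploit the upper semicontinuity of $\vartheta_1,\vartheta_2$ at the reference point $(\bar x, \bar a)$ to replace the state--action dependent bound by a constant, which can then be absorbed into the disturbance-dependent part $\Theta_i$. This turns the combined bound $\vartheta_i(x,a)+\Theta_i(z)$ into a pure function of $z$ on a suitable local neighborhood of $(\bar x,\bar a)$.

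First, fix $(\bar x, \bar a) \in D$. Upper semicontinuity of $\vartheta_i$ means that the sublevel set $\{(x,a)\in D:\vartheta_i(x,a)<\vartheta_i(\bar x,\bar a)+1\}$ is open in $D$ and contains $(\bar x,\bar a)$, so there exists $\epsilon_i>0$ with $\vartheta_i(x,a)\le M_i:=\vartheta_i(\bar x,\bar a)+1$ for every $(x,a)\in B_{\epsilon_i}(\bar x,\bar a)\cap D$ (choosing $\epsilon_i$ slightly smaller than the radius of the open neighborhood supplied by upper semicontinuity handles the open/closed ball distinction). Setting $\epsilon:=\min(\epsilon_1,\epsilon_2)>0$ gives a common radius on which both $\vartheta_i$ are bounded by $M_i$.

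Next, define $\Theta_i^{\bar x,\bar a}(z):=M_i+\Theta_i(z)$ for $i=1,2$. These are non-negative, measurable (as sums of a constant and a measurable function) and lie in $L^p(\Omega,\A,\P)$ because constants are integrable on a probability space and $\Theta_i(Z)\in L^p$ by hypothesis. Combining the separation hypothesis with $\vartheta_i(x,a)\le M_i$ on $B_\epsilon(\bar x,\bar a)\cap D$ yields for every such $(x,a)$ and every $z\in\Z$
\[|c(x,a,T(x,a,z))|\le\vartheta_1(x,a)+\Theta_1(z)\le\Theta_1^{\bar x,\bar a}(z),\qquad b(T(x,a,z))\le\vartheta_2(x,a)+\Theta_2(z)\le\Theta_2^{\bar x,\bar a}(z),\]
which is precisely Assumption \ref{ass:infinite} (ii). For Assumption \ref{ass:finite} (ii), where the global bounding function $\b$ appears in place of $b$, recall that in the setting of local bounding functions the global one is a scalar multiple $\b=\tfrac{1}{1-\alpha}b$ by Lemma \ref{thm:stagewise_bound}, so rescaling $\Theta_2^{\bar x,\bar a}$ by the factor $(1-\alpha)^{-1}$ (and allowing $n$-dependence in the non-stationary case, which only introduces finitely many indices) yields the corresponding bound on $\b(T_n(x,a,z))$.

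There is no substantive obstacle in this proof; the argument is essentially book-keeping. The only point that requires a moment of care is the passage from upper semicontinuity (open neighborhoods) to the assumption's closed ball $B_\epsilon(\bar x,\bar a)$, which is the one place where the topological hypothesis is actually used.
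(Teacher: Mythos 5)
Your proof is correct, but it takes a genuinely different route from the paper. The paper exploits the monotone structure of Section \ref{sec:monotone}: for an arbitrary $\epsilon>0$ it forms the compact set $S=[\bar x-\epsilon,\bar x+\epsilon]\times D(\bar x-\epsilon)$ (compact by Tychonoff and the compactness of the action sets), notes that $B_\epsilon(\bar x,\bar a)\cap D\subseteq S$ because $D(\cdot)$ is decreasing, and then uses the fact that an upper semicontinuous function attains its supremum on a compact set to produce the finite constants $\vartheta_i(x_i,a_i)$ which are added to $\Theta_i$. You instead use only the pointwise consequence of upper semicontinuity at $(\bar x,\bar a)$: the strict sublevel set $\{\vartheta_i<\vartheta_i(\bar x,\bar a)+1\}$ is relatively open, so $\vartheta_i$ is bounded by $M_i=\vartheta_i(\bar x,\bar a)+1$ on a sufficiently small closed ball intersected with $D$, and $M_i$ is finite since $\vartheta_i$ is real-valued. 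This is more elementary and in fact more general --- it never uses Assumption \ref{ass:monotone} (neither compactness of $D(x)$ nor monotonicity of the constraint map), whereas the paper's argument is tailored to the monotone model in which the lemma is stated; the price is only that your $\epsilon$ must be taken small, while the paper's works for every $\epsilon$, a difference that is irrelevant since the assumptions only require existence of some $\epsilon>0$. Your explicit remark on passing from $b$ to the global bounding function $\b$ via the scaling $\tfrac{1}{1-\alpha}$ (resp.\ $\tfrac{1}{1-\alpha\beta}$ with discounting) for Assumption \ref{ass:finite} (ii) is a point the paper's proof glosses over, and it is handled correctly.
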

\begin{proof}
	Let $(\bar x, \bar a) \in D$. We can choose $\epsilon >0$ arbitrarily. The set $S=[\bar x- \epsilon, \bar x+\epsilon] \times D(\bar x - \epsilon)$ is compact w.r.t.\ the product topology by the Tychonoff Product Theorem 2.61 in \cite{AliprantisBorder2006}. Moreover, $B_\epsilon(\bar x,\bar a) \cap D \subseteq S$ since the set-valued mapping $D(\cdot)$ is decreasing. Due to upper semicontinuity there exist $(x_i, a_i) \in S$ such that $\vartheta_i( x_i, a_i) = \sup_{(x,a) \in S} \vartheta_i(x,a)$, $i=1,2$. Hence, one can define
	\[ \Theta_i^{\bar x, \bar a}(\cdot) = \vartheta_i(x_i, a_i) + \Theta_i(\cdot), \qquad i=1,2 \]
	and Assumptions \ref{ass:finite} (ii) and \ref{ass:infinite} (ii) are satisfied.
\end{proof}

A monotone model not only allows for weaker assumptions on the transition function, but also requirements regarding the risk measure may be relaxed. In the following, we study two such cases: local bounding and infinite horizon cost minimization with bounded below cost. 

Firstly, the existence of a global upper and lower bounding function can be guaranteed by suitable local bounding functions as in Lemma \ref{thm:stagewise_bound}. However due to the monotonicity properties of the model, the risk measure does not need to be coherent. E.g.\ spectral risk measures can be replaced by other distortion risk measures.

\begin{lemma}\label{thm:monotone_stagewise_bound}
	Let Assumption  \ref{ass:monotone} be satisfied and the monetary risk measure $\rho$ be positive homogeneous and comonotonic additive. If there exist  $\ubar \epsilon, \bar \epsilon \geq 0$ with $\ubar \epsilon+ \bar \epsilon =1$, increasing functions $\ubar b:\R \to (-\infty, -\ubar \epsilon]$, $\bar b: \R \to [\bar \epsilon,\infty)$ and a constant $\alpha > 0 $ such that $\alpha\beta \in (0,1)$ and
	\begin{align*}
	\rho\big( c(x,a,T(x,a,Z))\big)& \geq \ubar b(x), & \rho\big( \ubar b(T(x,a,Z))\big) &\geq \alpha \ubar b(x),\\
	\rho\big( c(x,a,T(x,a,Z))\big) &\leq \bar b(x), & \rho\big( \bar b(T(x,a,Z))\big) &\leq \alpha \bar b(x),
	\end{align*}
	for all $(x,a) \in D$, then
	\begin{align*}
	\ubar \b = \frac{1}{1-\alpha\beta}\ubar b \qquad \text{and} \qquad \bar \b = \frac{1}{1-\alpha\beta}\bar b
	\end{align*}
	are global lower/ upper bounding functions and Assumption \ref{ass:finite} (i) holds.
\end{lemma}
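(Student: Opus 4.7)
The plan is to mirror the backward induction from the proof of Lemma \ref{thm:stagewise_bound}, with a single structural substitution: every appeal to coherence (that is, to subadditivity and to Lemma \ref{thm:subadditivity_complement}) will be replaced by one application of comonotonic additivity together with positive homogeneity of $\rho$. The enabling observation is the following comonotonicity fact. Fix $(x,a)\in D$ and set $Y := T(x,a,Z)$. By Assumption \ref{ass:monotone}(iv), $c(x,a,\cdot\,)$ is increasing in its third argument, so $c(x,a,T(x,a,Z))$ is an increasing function of $Y$; similarly $\ubar b(T(x,a,Z))$ and $\bar b(T(x,a,Z))$ are increasing functions of $Y$ by the assumption that $\ubar b$ and $\bar b$ are increasing. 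Two random variables that are increasing functions of a common random variable are comonotonic, so $c(x,a,T(x,a,Z))$ is comonotonic with each of $\ubar b(T(x,a,Z))$ and $\bar b(T(x,a,Z))$.

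Given this, I would run backward induction on $n$. Since the section assumes the stationary model with no terminal cost, the base case is $V_{N\pi}\equiv 0$, which lies in $[\ubar{\b},\bar{\b}]$ because $\ubar b \le -\ubar\epsilon \le 0 \le \bar\epsilon \le \bar b$ and $\tfrac{1}{1-\alpha\beta}>0$. For the inductive step, write $T=T(x_n,d_n(h_n),Z_{n+1})$; monotonicity of $\rho$ and $V_{n+1,\pi}\ge \ubar{\b}=\tfrac{1}{1-\alpha\beta}\ubar b$ give
\[
V_{n\pi}(h_n) \;\ge\; \rho\bigl( c(x_n,d_n(h_n),T) + \tfrac{\beta}{1-\alpha\beta}\,\ubar b(T) \bigr).
\]
Using the comonotonicity observation together with comonotonic additivity and positive homogeneity of $\rho$, this splits as $\rho(c(x_n,d_n(h_n),T)) + \tfrac{\beta}{1-\alpha\beta}\rho(\ubar b(T))$, and the two hypothesized local lower bounds push it down to $\ubar b(x_n) + \tfrac{\alpha\beta}{1-\alpha\beta}\ubar b(x_n) = \tfrac{1}{1-\alpha\beta}\ubar b(x_n) = \ubar{\b}(x_n)$. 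The upper bound follows by the symmetric argument with $V_{n+1,\pi}\le\bar{\b}$ and the two upper local bounds.

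The main thing to get right — and the sole place where monotonicity of the model is essential — is verifying comonotonicity at every invocation of additivity. Without coherence, additivity of $\rho$ is only available for comonotonic arguments, and this is precisely why the hypothesis that $c$ is increasing in its third argument (Assumption \ref{ass:monotone}(iv)) combined with monotonicity of $\ubar b, \bar b$ is non-negotiable here. Once this check is in place, the remainder is a routine rewriting of the Lemma \ref{thm:stagewise_bound} argument, so no further technical difficulty is expected.
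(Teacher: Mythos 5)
Your proposal is correct and follows essentially the same route as the paper: a backward induction where coherence is replaced by one application of comonotonic additivity plus positive homogeneity, justified by the fact that $c(x,a,T(x,a,Z))$ and $\ubar b(T(x,a,Z))$ (resp.\ $\bar b(T(x,a,Z))$) are increasing functions of the same random variable. Your explicit verification of the comonotonicity, which the paper leaves implicit, is the only difference and is welcome detail rather than a different argument.
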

\begin{proof}
	We proceed by backward induction. At time $N$ there is nothing to show. Assuming the assertion holds at time $n+1$, it follows for time $n$:
	\begin{align*}
	V_{n\pi}(h_n) &= \rho\Big( c\big(x_n,d_n(h_n),T(x_n,d_n(h_n),Z)\big) + \beta V_{n+1\pi}\big(h_n,d_n(h_n),T(x_n,d_n(h_n),Z)\big) \Big)\\
	&\geq \rho\Big( c\big(x_n,d_n(h_n),T(x_n,d_n(h_n),Z)\big) + \frac{\beta}{1-\alpha\beta} \ubar b\big(T(x_n,d_n(h_n),Z)\big) \Big)\\
	&= \rho\Big( c\big(x_n,d_n(h_n),T(x_n,d_n(h_n),Z)\big)\Big) + \frac{\beta}{1-\alpha\beta} \rho \Big( \ubar b\big(T(x_n,d_n(h_n),Z)\big) \Big)\\
	&\geq \ubar b(x_n) + \frac{\alpha\beta}{1-\alpha\beta} \ubar b(x_n)= \ubar \b(x_n),
	\end{align*}
	$\pi \in \Pi, \ h_n \in \H_n$. The second equality is by the comonotonic additivity and positive homogeneity of $\rho$. Regarding the upper bounding function one argues analogously. 	
\end{proof}

In Lemma \ref{thm:monotone_stagewise_bound}, the local bounding functions are assumed to be increasing, which was not necessary in Lemma \ref{thm:stagewise_bound}. Also note that we only have to require $\rho\big( \ubar b(T(x,a,Z))\big) \geq \alpha \ubar b(x)$, $(x,a) \in D$ which is weaker than the corresponding assumption for the model with general state space, cf. Lemma \ref{thm:stagewise_bound} and Remark \ref{rem:stagewise_bound}.

As a second example, where the assumptions on the risk measure can be relaxed, we consider infinite horizon cost minimization with bounded below cost. For absolutely bounded cost functions we already showed in Corollary \ref{thm:infinite_bounded_case} that a coherent risk measure is not necessary to solve the infinite horizon problem. This result is very general regarding the risk measure but very restrictive concerning the one-stage cost. The monotone model allows for a middle course.

\begin{enumerate}
	\item[(B$^-$)] There exist $\ubar b \leq 0$, $\bar \epsilon \geq 0$ and $\alpha \geq 1$ with $\bar \epsilon - \ubar b=1$ and an increasing function $\bar b:\R \to [\bar \epsilon,\infty)$ such that $c\big(x,a,T(x,a,Z)\big) \geq \ubar b $ $\P$-f.s.\ and
	\begin{align*}
	\rho\big( c(x,a,T(x,a,Z))\big) &\leq \bar b(x), & \rho\big( \bar b(T(x,a,Z))\big) &\leq \alpha \bar b(x).
	\end{align*}
	for all $(x,a) \in D$.
\end{enumerate}

W.l.o.g.\ we assume $\alpha \geq 1$ since then $\rho(-\ubar b) = -\ubar b \leq \alpha \ubar b$ due to translation invariance and normalization. Otherwise one would need separate alphas for the lower and upper local bounding function. If the risk measure is comonotonic additive and positive homogeneous, the objective function is globally bounded under (B$^-$) due to Lemma \ref{thm:monotone_stagewise_bound} and Theorem \ref{thm:finite} remains true. Under an infinite planning horizon, the assertion of Theorem \ref{thm:infinite} can be proven without requiring a coherent risk measure. When we refer to the interval $I=[\ubar \b, \bar \b]$ in the following, it is to be understood as a subset of the modified function space $\BB$ as in Proposition \ref{thm:monotone}.

\begin{proposition}\label{thm:monotone_bounded_below}
	Let Assumptions \ref{ass:monotone} and \ref{ass:infinite} be satisfied with the modification that part (i) is replaced by (B$^-$) and part (iii) by the requirement that $\rho$ is a law invariant, comonotonic additive and positive homogeneous monetary risk measure with the Fatou property. Then it holds:
	\begin{enumerate}
		\item The sequence $\{J_{N\pi}\}_{N \in \N}$ converges pointwise for every Markov policy $\pi \in \Pi^M$ and the limit function $J_{\infty \pi}$ is bounded by $\ubar \b$ and $\bar \b$.
		\item The Bellman operator $\T$ is a contraction on $I$ with modulus $\alpha\beta \in (0,1)$ and the limit value function $J$ is the unique fixed point of $\T$ in $I$.
		\item There exists a Markov decision rule $d^*$ such that $\T_{d^*} J = \T J$, each stationary policy $\pi^*=(d^*,d^*,\dots)$ induced by such a Markov decision rule is optimal for optimization problem \eqref{eq:opt_crit_infinite} and it holds $J_{\infty} = J$.
	\end{enumerate}
\end{proposition}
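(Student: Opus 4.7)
The plan is to mirror the three-part proof structure of Lemma \ref{thm:infinite_convergence}, Lemma \ref{thm:contraction} and Theorem \ref{thm:infinite}, but with two key substitutions that exploit hypothesis (B$^-$) and Assumption \ref{ass:monotone}. First, since the lower local bound $\ubar b$ is now a constant, every place where the coherent proofs added a constant multiple of $\ubar b \circ T$ inside $\rho$ can instead invoke translation invariance in place of Lemma \ref{thm:subadditivity_complement}. Second, whenever I need to separate a cost/value term from a $b$-term inside $\rho$, I use comonotonic additivity instead of subadditivity or the coherent triangle inequality; this is legitimate because by Assumption \ref{ass:monotone}(iv) the function $c$ is increasing in $x'$, while elements of the modified space $\BB$ from Proposition \ref{thm:monotone} and the bounding function $b = \bar b - \ubar b$ are increasing, so $c + \beta v \circ T$ and $b \circ T$ are both increasing functions of the common random variable $W = T(x,a,Z)$ and therefore comonotonic.

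For part (a) I would prove the weak monotonicity $J_{N\pi}(x) \geq J_{N-1\pi}(x) + (\alpha\beta)^{N-1}\ubar b$ by induction on $N$. In the inductive step, after applying monotonicity of $\rho$ to substitute $J_{N-1\vec\pi} \geq J_{N-2\vec\pi} + (\alpha\beta)^{N-2}\ubar b$, the scaled constant $\beta(\alpha\beta)^{N-2}\ubar b$ pulls out of $\rho$ by translation invariance, leaving an $\alpha^{-1}(\alpha\beta)^{N-1}\ubar b$ remainder; this is bounded below by $(\alpha\beta)^{N-1}\ubar b$ because $\alpha \geq 1$ and $\ubar b \leq 0$. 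Summing the telescoping inequality gives pointwise convergence exactly as in Lemma \ref{thm:infinite_convergence}, and the bounds $\ubar \b \leq J_{N\pi} \leq \bar\b$ pass from finite horizon (via Lemma \ref{thm:monotone_stagewise_bound}, whose hypotheses are met under (B$^-$) with comonotonic additivity and positive homogeneity) to the limit.

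For part (b) the crucial step is replacing the coherent triangle inequality from Lemma \ref{thm:coherent_triangular}. Starting from $v_2 \leq v_1 + \|v_1-v_2\|_b \, b$, composing with $T$, multiplying by $\beta$, adding $c$ and applying monotonicity of $\rho$, I obtain
\[ \rho\bigl( c + \beta\, v_2 \circ T \bigr) \leq \rho\bigl( c + \beta\, v_1 \circ T + \beta \|v_1-v_2\|_b (b \circ T) \bigr). \]
The right-hand side splits by comonotonic additivity combined with positive homogeneity into $\rho(c + \beta v_1 \circ T) + \beta\|v_1-v_2\|_b \,\rho(b\circ T)$. Using $\rho(\bar b \circ T) \leq \alpha \bar b(x)$ from (B$^-$), translation invariance to absorb the constant $-\ubar b$, and the inequality $\alpha \ubar b \leq \ubar b$ (valid because $\alpha \geq 1$ and $\ubar b \leq 0$), one obtains $\rho(b\circ T) \leq \alpha\, b(x)$. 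Interchanging $v_1$ and $v_2$ yields the $\alpha\beta$-contraction on $I$, and $\T$ maps $I$ into itself by the same argument used in Lemma \ref{thm:monotone_stagewise_bound}.

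Part (c) then follows as in Theorem \ref{thm:infinite}: Banach's fixed point theorem identifies $J$ as the unique fixed point of $\T$ in $I$; the minimizing Markov decision rule exists by Proposition \ref{thm:monotone}(a); and optimality of the induced stationary policy $\pi^*$ follows from the induction $J(x) \geq J_{N\pi^*}(x) + \frac{(\alpha\beta)^N}{1-\alpha\beta}\ubar b$, where once more the step that previously used Lemma \ref{thm:subadditivity_complement} is replaced by translation invariance (the constant $\frac{\beta(\alpha\beta)^{N-1}}{1-\alpha\beta}\ubar b$ pulls out of $\rho$) and the factor $\alpha^{-1}$ is absorbed because $\alpha \geq 1$ and $\ubar b \leq 0$. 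The principal obstacle is bookkeeping: carefully checking that every invocation of coherence in the original proofs corresponds here either to a constant summand inside $\rho$ (so translation invariance suffices) or to a pair of summands which are genuinely comonotonic thanks to the monotonicity of $v$, $b$ and $c(x,a,\cdot)$ in the last coordinate.
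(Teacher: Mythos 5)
Your proposal is correct and follows essentially the same route as the paper's proof: the identical weak-monotonicity induction with the constant $\ubar b$ pulled out by translation invariance and the factor $\alpha^{-1}$ absorbed via $\alpha\geq 1$, the contraction estimate obtained by splitting off the $b\circ T$ term through comonotonic additivity and positive homogeneity (your one-step grouping of $c+\beta v_1\circ T$ versus the paper's splitting off $c$ first is immaterial), and the same Banach fixed point plus verification argument for part (c). No gaps worth noting.
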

\begin{proof}
	\begin{enumerate}
		\item We show by induction that for all $N \in \N$
		\begin{align}\label{eq:real_infinite_convergence_1}
		J_{N\pi}(x) \geq J_{N-1 \pi}(x) + (\alpha\beta)^{N-1} \ubar b, \qquad x \in \R.
		\end{align}
		For $N=1$ it holds due to (B$^-$) that $J_{1\pi}(x) \geq \ubar b = J_{0\pi}(x) + (\alpha\beta)^{0} \ubar b.$
		For $N \geq 2$ it follows with the monotonicity and translation invariance of $\rho$ that
		\begin{align*}
		J_{N\pi}(x)&= \T_{d_0} J_{N-1 \vec \pi}(x) \geq \T_{d_0} \left(J_{N-2 \vec \pi}+ (\alpha\beta)^{N-2} \ubar b \right)(x) = \T_{d_0} J_{N-2 \vec \pi}(x) + \beta (\alpha\beta)^{N-2} \ubar b\\
		& \geq \T_{d_0} J_{N-2 \vec \pi}(x) + (\alpha\beta)^{N-1} \ubar b = J_{N-1\pi}(x) + (\alpha\beta)^{N-1} \ubar b.
		\end{align*} 
		Thus, \eqref{eq:real_infinite_convergence_1} holds. Applying this inequality repeatedly for $N, N-1, \dots, m$ yields
		\begin{align*}
		J_{N\pi}(x) \geq J_{m\pi}(x) + \sum_{k=m}^{N-1} (\alpha\beta)^k \ubar b \geq J_{m\pi}(x) + \sum_{k=m}^{\infty} (\alpha\beta)^k \ubar b.
		\end{align*}
		Since $\sum_{k=m}^{\infty} (\alpha\beta)^k \ubar b$ is non-positive and converges to zero as $m \to \infty$, the sequence $\{J_{N\pi}\}_{N \in \N}$ is weakly increasing and hence convergent to a limit $J_{\infty \pi}$ by Lemma A.1.4 in \cite{BaeuerleRieder2011}. Clearly, the global bounds $\ubar \b, \bar \b(\cdot)$ also apply to the limit $J_{\infty \pi}$.
		\item Let $v \in I$. Due to Proposition \ref{thm:monotone} $\T v$ is increasing and lower semicontinuous. Furthermore, the monotonicity and translation invariance of $\rho$ imply
		\begin{align*}
		\T v(x) \geq \T \ubar \b(x) = \T 0(x) + \ubar \b \geq \ubar b + \frac{\alpha\beta}{1-\alpha\beta}\ubar b =\ubar \b.
		\end{align*}
		Regarding the upper bounding function it follows from the comonotonic additivity and positive homogeneity of $\rho$ that
		\begin{align*}
		\T v(x) &\leq \T \bar \b(x) = \inf_{a \in D(x)} \rho\Big( c\big(x,a,T(x,a,Z)\big) + \frac{\beta}{1-\alpha\beta} \bar b\big(T(x,a,Z)\big) \Big)\\
		&= \inf_{a \in D(x)} \rho\Big( c\big(x,a,T(x,a,Z)\big)\Big) + \frac{\beta}{1-\alpha\beta} \rho\Big(\bar b\big(T(x,a,Z)\big) \Big)\\
		&\leq \bar b(x) + \frac{\alpha\beta}{1-\alpha\beta}\bar b(x)=\bar \b(x).
		\end{align*}
		I.e.\ $\T$ is an endofunction on $I$ and it remains to verify the Lipschitz constant. For $v_1,v_2 \in I$ it holds
		\begin{align*}
		&\T v_1(x) - \T v_2(x)\\
		&\leq \sup_{a \in D(x)}  Lv_1(x,a) - Lv_2(x,a)= \beta \sup_{a \in D(x)}  \rho\Big( v_1\big(T(x,a,Z)\big) \Big) - \rho\Big( v_2\big(T(x,a,Z)\big) \Big) \\
		& = \beta \sup_{a \in D(x)}  \rho\Big(  v_1\big(T(x,a,Z)\big) - v_2\big(T(x,a,Z)\big) + v_2\big(T(x,a,Z)\big) \Big) - \rho\Big( v_2\big(T(x,a,Z)\big) \Big)\\
		& \leq \beta \sup_{a \in D(x)}  \rho\Big( \|v_1-v_2\|_b b\big(T(x,a,Z)\big)  + v_2\big(T(x,a,Z)\big) \Big) - \rho\Big( v_2\big(T(x,a,Z)\big) \Big) \\
		&=  \|v_1-v_2\|_b \beta \sup_{a \in D(x)}  \rho\Big(  b\big(T(x,a,Z)\big)\Big)= \|v_1-v_2\|_b \beta \sup_{a \in D(x)} \Big[ \rho\Big(\bar  b\big(T(x,a,Z)\big)\Big) - \ubar b \Big]\\
		& \leq\alpha\beta \|v_1-v_2\|_b  [\bar b(x)-\ubar b]=\alpha\beta \|v_1-v_2\|_b  b(x).
		\end{align*}
		The first equality is by comonotonic additivity and positive homogeneity. Since $\ubar b$ is constant, $b(\cdot)=\bar b(\cdot) - \ubar b$ is an increasing function and so is $v_2$. Therefore, the third equality is again by comonotonic additivity. The last inequality is by (B$^-$) using $\alpha \geq 1$. Interchanging the roles of $v_1$ and $v_2$ yields
		\[ 	\left|\T v_1(x) - \T v_2(x)\right| \leq \alpha\beta \|v_1-v_2\|_b  b(x). \]
		Finally, dividing by $b(x)$ and taking the supremum over $x \in \R$ shows that $\T$ is a contraction and Banach's Fixed Point Theorem yields the assertion. 
		\item The existence of a minimizing Markov decision rule follows from Proposition \ref{thm:monotone}. With the same argument as in the proof of Theorem \ref{thm:infinite}, the relation $J  \leq J_{\infty} \leq J_{\infty \pi}$ holds for any policy and it remains to show that $J_{\infty \pi^*} \leq J$ for the specific policy $\pi^*$. To that end, we will prove by induction that $ J \geq J_{N\pi^*} + (\alpha\beta)^N \ubar \b$ for all $N \in \N_0$. Then, letting $N \to \infty$ concludes the proof. The case $N=0$, i.e.\ $J(x) \geq \frac{1}{1-\alpha\beta} \ubar b$, holds by part b). For $N \geq 1$ we have
		\begin{align*}
		J(x) &= \T_{d^*} J(x) \geq \T_{d^*} \left(J_{N-1\pi^*} + (\alpha\beta)^{N-1} \ubar \b \right)(x) = \T_{d^*} J_{N-1\pi^*}(x) + \beta(\alpha\beta)^{N-1} \ubar \b\\
		&\geq \T_{d^*} J_{N-1\pi^*}(x) + (\alpha\beta)^{N} \ubar \b = J_{N\pi^*}(x) + (\alpha\beta)^N \ubar \b.
		\end{align*}
		The first inequality is by the induction hypothesis and the monotonicity of $\rho$, the equality thereafter is by translation invariance and the second inequality holds since $\alpha \geq 1$. \qedhere
	\end{enumerate}
\end{proof}

\section{Examples}\label{sec:examples}
In this section, we present some applications of the results in the previous sections. In particular we show that often structural results about optimal policies which are know from the classical iterated expectation case still hold under more general risk measures.

\begin{example}[Value-at-Risk is Myopic in Monotone Models]
	In a monotone model as in Section \ref{sec:monotone}, where the one-stage cost function does not depend on the controller's action, i.e.\ $c(x,a,x')=c(x,x')$, recursive decision making with Value-at-Risk is myopic. This can be seen as follows. Let Assumptions \ref{ass:monotone} and \ref{ass:finite} (i),(ii) be satisfied.  The Bellman equation here reads
	\begin{align*}
		J_N(x)&=0\\
		J_n(x)&= \inf_{a \in D(x)} \VaR_{\alpha}\big( c(x,T(x,a,Z)) + \beta J_{n+1}(T(x,a,Z)) \big), \qquad n=0,\dots,N-1.
	\end{align*}
	We can now interchange $\VaR$ with the increasing lower semicontinuous (i.e.\ left-continuous) function $h(x') = c(x,x') + \beta J_{n+1}(x'), \ x' \in \R$ by properties of the quantile function (see e.g. Proposition 2.2 in \cite{BauerleGlauner2018}). Doing this we obtain
	\[ J_n(x) = \inf_{a \in D(x)} h\big(\VaR_{\alpha}(T(x,a,Z))\big) =  h\left( \inf_{a \in D(x)} \VaR_{\alpha}(T(x,a,Z))\right). \]
	Hence, the minimizer of  $a \mapsto  \VaR_{\alpha}(T(x,a,Z))$ induces an optimal decision rule for each stage. In particular the optimal policy is stationary and does not depend on time.
	
	Note here that we can interpret a spectral risk measure as a Value-at-Risk criterion with unknown parameter $\alpha$ which has a prior distribution given by the density $\phi$. However, since we apply it recursively at each stage, learning of the parameter is not possible.
\end{example}

\begin{example}[Stopping Problems]
Let us consider the following standard stopping problem: Suppose a real-valued Markov chain $(X_n)\subset L^p$ is given by $X_{n+1}=T(X_n,Z_{n+1})$ where $(Z_n)$ is an i.i.d.\ sequence of random variables. We are allowed to observe the Markov chain and when we stop it in state $x$ we have to pay the cost $c(x)$. In case we do not stop we have to pay the fixed cost $\bar{c}$. We have to stop no later than time point $N$.  Suppose Assumptions \ref{ass:finite} (i) and (ii) are fulfilled.  The risk measure $\rho$ is simply monetary and finite. The Fatou property is not needed here since the existence of minimizers is immediate. The Bellman equation is 
	\begin{align*}
		J_N(x)&=x,\\
		J_n(x)&= \min \left\{ \rho(c(x)) ;\; \rho\left( \bar{c} +  \beta J_{n+1}\big(T(x,Z_{n+1})\big)\right) \right\}, \\
		&=  \min \left\{ \rho(c(x)) ;\; \bar{c} +  \rho\left( \beta J_{n+1}\big(T(x,Z_{n+1})\big)\right) \right\}, \qquad n=0,\dots,N-1.
	\end{align*}
In the well-known house selling application for example $X_n$ is the offer for a house at time $n$ that we may buy. When we decide to buy it we have to pay the price, i.e.\ $c(x)=x$. In case we do not buy, we still have to pay the rent $\bar{c}$. Offers are here assume to be i.i.d. Thus, the Bellman equation specializes to 
	\begin{align*}
		J_N(x)&=x,\\
		J_n(x)&= \min \left\{ \rho(x) ; \bar{c} + \rho\left( \beta J_{n+1}(Z_{n+1})\right) \right\}, \qquad n=0,\dots,N-1.
	\end{align*}
 Thus, when we define
$$ t_n := \sup \big\{ x\in \R :  \rho(x) \le \bar{c} + \rho\left( \beta J_{n+1}(Z_{n+1})\right)\Big\} $$ then the optimal policy obviously is to buy at time $n$ if $x\le t_n$, otherwise not. Hence the optimal strategy is still a threshold policy, but the thresholds depend on $\rho$.
For example if $\rho$ is normalized and $\rho(X) \ge \E X$ (this is e.g.\ satisfied for Average-Value-at-Risk or the Entropic risk measures) then $t_n \ge t^E_n$ where $t^E_n$ belongs to the case $\rho=\E$. Hence, under the risk measure we will accept an offer earlier.
\end{example}

\begin{example}[Casino Game]
Suppose we have to play $N$-times the same game and decide how much of our current capital we should bet. Outcomes are either a gain or a loss and given by i.i.d.\ random variables $(Z_n)$, with $\Pop(Z_n=1)=p=1-\Pop(Z_n=-1)$. Note that the $Z_n$ are bounded. We assume that the risk measure is monetary, law-invariant, positive homogeneous and has the Fatou property. We want to minimize the risk of a loss. Note that Assumptions \ref{ass:continuity_compactness} and \ref{ass:finite} are satisfied. The Bellman equation is 
	\begin{align*}
		J_N(x)&=-x,\\
		J_n(x)&= \inf_{0\le a\le x} \rho \left(  J_{n+1}\big(T(x,a,Z_{n+1})\big) \right), \\
		&=   \inf_{0\le a\le x} \rho \left( J_{n+1}\big(x+aZ\big) \right), \qquad n=0,\dots,N-1.
	\end{align*}
	It is then easy to see by induction that the optimal policy is stationary and given by
	$$ d^*(x) = \left\{ \begin{array}{cc}
	            0, & \rho(-Z) \ge 0,\\
	            x, & \rho(-Z) < 0. \end{array} \right.
	$$
From the monotonicity and law-invariance of $\rho$ it follows that there exists $p^* \in [0,1]$ such that 
	$$ d^*(x) = \left\{ \begin{array}{cc}
	0, & p < p*,\\
	x, & p \ge p^*. \end{array} \right.
	$$
In case $\rho(-Z) < 0 $, bold-play is best and we obtain by induction that $J_n(x) = -x(1-\rho(-Z))^{N-n}$.
When $\rho$ is additionally convex (which then means coherent) and the game is fair ($p=\frac12$), we obtain since $0\le_{cx} -Z_n$ in convex order implying that $0=\rho(0)\le \rho(-Z)$ (see Theorem 3.4 in \cite{BaeuerleMueller2006}) and it is obviously best not to play.

\end{example}

\begin{example}[Cash Balance]
In a cash balance problem the aim is to keep the cash level of a company close to zero, because a negative cash level means we have to pay interest and a positive cash level creates opportunity cost (see Section 2.6.2 in \cite{BaeuerleRieder2011}). We assume that a convex function $L:\R \to\R_+$ with $L(0)=0$ gives the cost of deviating from zero. The cash level is subject to random changes which are modelled as i.i.d.\ random variables $(Z_n)$. It is possible to increase or decrease the cash level at the beginning of each period by paying transfer cost. The transfer cost $c:\R\to\R_+$ are assumed to be piecewise linear:
$$ c(x') = c_u (x')^+ + c_d (x')^-$$
with $c_u, c_d >0$.  Of course the state is here the cash level and we choose the action to be the new cash level. Hence the transition function is $T(x,a,z)= a-z$. We consider the infinite horizon problem and suppose that Assumption \ref{ass:infinite} is in force. Assumption \ref{ass:continuity_compactness} is here satisfied, except for the compactness of the admissible actions which are here $\R$. However, it can be seen that it is possible to restrict to a compact level set (see Section 2.6.2 in \cite{BaeuerleRieder2011} for details). The Bellman equation for the infinite horizon problem is here
\begin{eqnarray*}
 J_\infty(x) &=& \sup_{a\in \R} \rho\Big( c(a-x)+L(a)+\beta J_\infty(a-Z)\Big)\\
 &=&  \sup_{a\in \R} \left\{ c(a-x)+L(a)+ \beta \rho\big( J_\infty(a-Z)\big)\right\}
\end{eqnarray*}
Now we can proceed exactly in the same way as in the classical case  (see Section 2.6.2 in \cite{BaeuerleRieder2011} for details) since the functions
\begin{eqnarray*}
h_u(a) &:=& (a-x)c_u + L(a)+ \beta \rho\big( J_\infty(a-Z)\big)\\
h_d(a) &:=& (x-a)c_d+\beta \rho\big( J_\infty(a-Z)\big)
\end{eqnarray*}
are still both convex under our assumption that $\rho$ is convex. We obtain:

\begin{proposition}
For the cash balance problem with infinite horizon it holds under Assumption \ref{ass:infinite}:
\begin{enumerate}
  \item[a)] There exist critical levels $S_{-}$ and $ S_{+}$ such that
$$ J_\infty(x) = \left\{ \begin{array}{cl}
(S_--x) c_u + L(S_{-})+\beta \rho\big( J_\infty(S_{-}-Z)\big)
\; & \,\mbox{if}\; x < S_{-}\\[0.2cm]
L(x)+\beta\rho\big( J_{\infty}(x-Z)\big) &\,\mbox{if}\; S_{-}\le x\le S_{+}\\[0.2cm]
(x-S_{+}) c_d + L(S_{+})+\beta \rho \big( J_{\infty}(S_{+}-Z)\big) \; &
\,\mbox{if}\; x> S_{+}.
\end{array}\right.$$
$J_\infty$ is convex.
  \item[b)] The stationary policy $(f^*,f^*,\ldots)$ is optimal
  with
  \begin{equation}\label{eq:cashbalance3}
f^*(x) := \left\{ \begin{array}{cl}
S_- \; &\, \mbox{if}\; x < S_-,\\
x & \,\mbox{if}\; S_- \le x\le S_+,\\
S_+\; & \,\mbox{if}\; x > S_+,
\end{array}\right.
\end{equation}
\end{enumerate}
\end{proposition}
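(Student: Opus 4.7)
My plan is to apply Theorem \ref{thm:infinite} to obtain the fixed-point characterization of $J_\infty$ together with an optimal stationary policy, then to show that $J_\infty$ inherits convexity from the one-stage data by running the contraction on the closed subset of convex functions, and finally to read off the $(S_-, S_+)$-structure from the piecewise linearity of $c$.

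First, Theorem \ref{thm:infinite} applies once the non-compact action space is handled: since $L$ is convex with $L(0) = 0$ and the transfer cost is coercive, the action $a = x$ is always feasible, and one may restrict the search to a compact sub-level set of $a \mapsto c(a-x) + L(a) + \beta J_\infty(a-\cdot)$ exactly as in Section 2.6.2 of \cite{BaeuerleRieder2011}. Theorem \ref{thm:infinite} then identifies $J_\infty$ as the unique fixed point of $\T$ in $I = [\ubar\b, \bar\b]$ and produces an optimal stationary policy $(f^*, f^*, \ldots)$ satisfying $\T_{f^*} J_\infty = \T J_\infty$.

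Second, to show convexity of $J_\infty$ I would verify that $\T$ maps the closed subset $\mathbb{C} \subseteq I$ of convex functions into itself. For $v \in \mathbb{C}$, $\lambda \in [0,1]$ and $a_1, a_2 \in \R$, convexity of $v$ gives $v(\lambda a_1 + (1-\lambda) a_2 - Z) \leq \lambda v(a_1 - Z) + (1-\lambda) v(a_2 - Z)$, and applying $\rho$, which is both monotone and (by coherence) convex, yields
$$\rho\bigl(v(\lambda a_1 + (1-\lambda) a_2 - Z)\bigr) \leq \lambda \rho\bigl(v(a_1 - Z)\bigr) + (1-\lambda)\rho\bigl(v(a_2 - Z)\bigr).$$
Together with the convexity of $L$ and of the piecewise linear $c$, this makes $(x,a) \mapsto c(a-x) + L(a) + \beta \rho(v(a-Z))$ jointly convex on $\R \times \R$, so partial infimization over $a$ produces a convex function of $x$. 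Since $\mathbb{C}$ is closed in $(\BB_\b, \|\cdot\|_\b)$ under pointwise limits, Banach's fixed point theorem applied inside $\mathbb{C}$ forces $J_\infty \in \mathbb{C}$.

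Third, using translation invariance and positive homogeneity, the Bellman equation rewrites as $J_\infty(x) = \inf_{a \in \R} \bigl\{ c(a-x) + L(a) + \beta \rho(J_\infty(a-Z)) \bigr\}$ and splits according to the sign of $a-x$ into the convex auxiliary problems of minimizing $h_u(a) := c_u a + L(a) + \beta \rho(J_\infty(a-Z))$ and $h_d(a) := -c_d a + L(a) + \beta \rho(J_\infty(a-Z))$ over $\R$, with the terms $-c_u x$ and $+c_d x$ factoring out. Let $S_-$ minimize $h_u$ and $S_+$ minimize $h_d$; a short subdifferential argument using $c_u, c_d > 0$ and monotonicity of $\partial(L + \beta \rho(J_\infty(\cdot - Z)))$ gives $S_- \leq S_+$. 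By convexity the constrained minimum of $h_u$ on $\{a \geq x\}$ is attained at $\max\{S_-, x\}$ and that of $h_d$ on $\{a \leq x\}$ at $\min\{S_+, x\}$. Comparing the three cases $x < S_-$, $S_- \leq x \leq S_+$, $x > S_+$ yields the claimed piecewise formula for $J_\infty$ and identifies $f^*$ as a minimizing Markov decision rule; optimality of the induced stationary policy then follows from Theorem \ref{thm:infinite} (c). The main obstacle is the convexity-preservation step, because unlike the classical expected-cost case one cannot rely on linearity; it is crucial that coherence of $\rho$ supplies both monotonicity and convexity simultaneously, allowing the convex inequality for $v$ to be routed through the risk measure.
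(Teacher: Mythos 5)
Your proposal is correct and follows essentially the same route as the paper: invoke the infinite-horizon theory (Theorem \ref{thm:infinite}) with the non-compact action set handled via compact level sets, observe that monotonicity plus convexity of the coherent $\rho$ preserves convexity of $a \mapsto \rho\big(v(a-Z)\big)$ so that $J_\infty$ is convex, and then run the classical $h_u/h_d$ two-threshold analysis from Section 2.6.2 of \cite{BaeuerleRieder2011}. You merely spell out the details the paper delegates to that reference (and, correctly, keep $L(a)$ in both auxiliary functions), so no gap.
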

Of course the switching points $S_-$ and $S^+$ depend on the choice of the risk measure $\rho$.
\end{example}

\bibliographystyle{amsplain}
\bibliography{Literature_MDP_recursive_risk_measures}

\end{document}